\documentclass[a4paper,12pt]{amsart}
\usepackage{graphicx,amsmath,amsfonts,amssymb,amsthm,mathrsfs}
\usepackage[a4paper,margin=1in]{geometry}
\usepackage{mathtools}
\usepackage{xcolor}
\usepackage{geometry}
\DeclareMathOperator{\divv}{div}

\renewcommand{\mod}{\mathop{\mathrm{mod}}}

\newcommand{\cK}{\mathcal{K}}

\newcommand{\cF}{\mathcal{F}}

\newcommand{\cC}{\mathcal{C}}
\newcommand{\cL}{\mathcal{L}}
\newcommand{\Div}{\mathrm{div}}

\numberwithin{equation}{section}

\theoremstyle{plain}
\newtheorem{theorem}{Theorem}[section]
\newtheorem{lem}[theorem]{Lemma}

\newtheorem{prop}[theorem]{Proposition}
\newtheorem{example}[theorem]{Example}

\newtheorem{definition}[theorem]{Definition}

\newtheorem{remark}[theorem]{Remark}

\global\long\def\floor#1{\left\lfloor #1\right\rfloor }%

\begin{document}
	\title[Construction of Non-Special Divisors on Kummer Covers]{Construction of Non-Special Divisors on Kummer Covers with
		Arbitrary Ramification for LCP Codes}
	
	\author{Adler Marques, Yuri da Silva, and Saeed Tafazolian
	}
	\date{}
	
	\address{Universidade Federal do Rio de Janeiro --	Instituto de Matemática,	Cidade Universitária,	CEP 21941-909, Rio de Janeiro, Brazil}
	\email{adler@im.ufrj.br}
	
	\address{Departamento de Matem\'{a}tica - Instituto de Matem\'{a}tica, Estat\'{i}stica e Computação Cient\'{i}fica
		(IMECC) - Universidade Estadual de Campinas (UNICAMP), Rua S\'{e}rgio Buarque de Holanda, 651, Cidade Universit\'{a}ria,  Zeferino Vaz, Campinas, SP 13083-859, Brazil}

	\email{y225979@dac.unicamp.br}
	\email{saeed@unicamp.br}
	

	
	\begin{abstract}
		Linear Complementary Pairs (LCP) of algebraic geometry (AG) codes 
		offer strong resistance against side-channel and fault-injection 
		attacks, but their construction depends critically on the explicit 
		identification of non-special divisors of degree $g$ and $g-1$. 
		Existing constructions are restricted to Kummer extensions where 
		divisors are supported exclusively on totally ramified places, 
		significantly limiting the range of applicable function fields and codes. 
		We remove this restriction by developing a framework for general 
		Kummer extensions $y^m = \prod_{i=1}^r (x-\alpha_i)^{\lambda_i}$ 
		over finite fields with arbitrary ramification. Using Galois group 
		actions and invariant divisor techniques, we establish necessary and 
		sufficient conditions for non-speciality with no constraint on the 
		support, yielding explicit constructions where previous methods fail. 
		Our approach replaces the computationally intensive Weierstrass 
		semigroup machinery with a more direct and efficient framework. 
		As an application, we construct new explicit families of LCP AG codes 
		with determined parameters $[n,k,d]$, covering three ramification 
		regimes. The resulting codes meet or approach the Goppa designed 
		distance, offering greater flexibility for cryptographic applications.
	\end{abstract}

	\maketitle
	
	\keywords{\textbf{Keywords:} Kummer extensions, non-special divisors, algebraic geometry codes, LCP codes, function fields, ramification.} 
	
	\subjclass{\textbf{MSC(2020):} Primary  94B27; Secondary 14H05, 14G50.}

	\section{Introduction}
	
	Linear Complementary Dual (LCD) codes and, more generally, Linear Complementary Pairs (LCP) of codes have become fundamental tools in modern cryptographic implementations due to their strong resistance against Side-Channel Attacks (SCA) and Fault Injection Attacks (FIA) \cite{Carlet2018}. Since their introduction by Massey \cite{Massey1992}, the construction of such codes with good parameters has remained an important problem in coding theory and cryptographic engineering.
	
	Algebraic Geometry (AG) codes, introduced by Goppa \cite{Goppa1981}, provide a powerful framework for constructing long linear codes with excellent asymptotic and explicit parameters. In particular, AG codes arising from algebraic function fields often achieve or approach classical bounds such as the Tsfasman--Vladut--Zink bound \cite{Tsfasman1982}. The construction of LCP AG codes is closely related to the existence of non-special divisors of degree $g$ and $g-1$, which is a delicate problem in the theory of algebraic function fields.
	
	Recent developments have focused on Kummer extensions of the form
	\[
	y^m = f(x),
	\]
	where $f(x)$ is not necessarily separable; see for instance \cite{Moreno2024},~\cite{4author} and \cite{Mendoza2026}. This represents a significant generalization at the level of the underlying function fields. However, despite this generality, existing constructions of divisors and AG codes in these works remain essentially restricted to \emph{totally ramified places}. In other words, even in non-separable settings, only places with full ramification are used in the construction of divisors. This restriction significantly reduces the set of available rational places and limits the flexibility of the resulting code families.
	
	In this work, we remove this limitation and develop a framework that allows the construction of non-special divisors supported on places with \emph{arbitrary ramification behavior}. This significantly enlarges the geometric and arithmetic range of admissible divisors, leading to a broader and more flexible class of AG code constructions.
	
	A second key contribution of this paper lies in the methodology. Existing approaches rely heavily on combinatorial tools such as generalized Weierstrass semigroups, which often lead to complicated and computationally intensive analyses. In contrast, our approach is more \emph{intrinsic and operational} (see Example \ref{Ex3.7}); it is based on the action of the Galois group on divisors together with systematic restriction arguments. This provides a more direct and computationally effective method for constructing non-special divisors.
	
	A central feature of our work is that it produces explicit examples of non-special divisors involving non–totally ramified places, where previous methods do not apply. This demonstrates that our framework strictly extends the scope of existing constructions in the literature.
	
	As an application, we construct new explicit families of LCP AG codes. For each construction, we compute the parameters $[n,k,d]$ and show that the resulting codes achieve strong minimum distance properties, often meeting or approaching the Goppa designed distance. Several explicit examples are provided to illustrate the effectiveness and flexibility of the proposed method.
	
	
	The main contributions of this paper are summarized as follows:
	\begin{itemize}
		\item \textbf{General Existence Criteria:} Establishing necessary and sufficient conditions for the existence of Galois-invariant non-special divisors of degree \(g\) in Kummer extensions with arbitrary ramification.
		\item \textbf{Methodological Advancement:} Introducing a constructive framework based on Galois actions that replaces the computationally intensive Weierstrass semigroup approach and removes the restriction to totally ramified places.
		\item \textbf{Explicit LCP AG Codes:} Constructing new families of Linear Complementary Pair (LCP) codes with determined parameters $[n, k, d]$ that extend the scope of existing literature.
		\item \textbf{Comparative Validation:} Providing explicit examples where our approach succeeds in yielding valid constructions in scenarios where previous methods fail.
	\end{itemize}

	The remainder of this paper is organized as follows. Section 2 reviews the necessary background on algebraic function fields and AG codes. Section 3 constitutes the core of this paper. In this section, we both state and prove the main theorem, providing a rigorous theoretical foundation for our approach. Furthermore, we explicitly compute divisor families for several specific classes of Kummer function fields where the places are non-totally ramified. Finally, we demonstrate that our proposed criteria offer broader applicability and are significantly more efficient than existing methodologies, even when restricted to the classical cases of totally ramified places. Section 4 applies the theoretical results derived in the previous section to the construction of Linear Complementary Pair (LCP) codes. By utilizing the non-special divisors constructed in Section 3, we develop a systematic framework for generating LCP codes across several families of function fields. A key feature of this construction is the inclusion of places that are not totally ramified, which significantly expands the variety of available codes and surpasses the limitations found in existing literature. Furthermore, we provide explicit examples to illustrate the parameters and performance of these new code families.
	\section{Preliminaries}
	
	In this section, we provide a comprehensive review of the theoretical foundations required for the development of our results. We cover the essentials of algebraic function fields, the construction of algebraic geometry (AG) codes, the properties of non-special divisors, and the specific arithmetic structure of general Kummer extensions. Throughout this paper, $\mathbb{F}_q$ denotes a finite field of cardinality $q$, where $q$ is a power of a prime $p$.
	
	\subsection{Algebraic Function Fields and Divisors}
	
	Let $\cF/\mathbb{F}_q$ be an algebraic function field of genus $g$. We denote by $\mathbb{P}_F$ the set of all places of $\cF$. For a place $P \in \mathbb{P}_F$, $v_P$ denotes the discrete valuation associated with $P$, and $\mathcal{O}_P$ is the corresponding valuation ring. A divisor $G$ is a formal sum $G = \sum_{P \in \mathbb{P}_F} n_P P$, where $n_P \in \mathbb{Z}$ and only finitely many $n_P$ are non-zero. The degree of $G$ is defined as $\deg(G) = \sum n_P \deg(P)$.
	
	The Riemann-Roch space associated with a divisor $G$ is defined as:
	\[ \mathscr{L}(G) = \{ f \in \cF \setminus \{0\} : \divv(f) + G \geq 0 \} \cup \{0\}. \]
	According to the Riemann-Roch Theorem \cite{Stichtenoth2009}, the dimension $\ell(G) = \dim_{\mathbb{F}_q} \mathcal{L}(G)$ satisfies:
	\[ \ell(G) = \deg(G) + 1 - g + i(G), \]
	where $i(G) = \ell(K-G)$ is the index of speciality and $K$ is a canonical divisor of $\cF$.
	
	\begin{definition}
		A divisor $A$ in a function field $\cF/\mathbb{F}_q$ of genus $g$ is called \textbf{non-special} if its index of speciality $i(A) = 0$. 
	\end{definition}
	Let $A \in \operatorname{Div}(\cF)$. By the Riemann-Roch Theorem,  we have
	\begin{itemize}
		\item  $A$ is a non-special divisor of $g-1$ if and only if $\ell(A)=0$; 
		\item $A$ is a non-special divisor of $g$ if and only if $\ell(A) = 1$.
	\end{itemize}

	In this section, we investigate the fundamental properties that govern the relationship between non-special divisors of adjacent degrees. The identification of such divisors is a prerequisite for several applications in the theory of algebraic function fields, particularly where the dimension of the Riemann-Roch space must be precisely determined. The following results provide a systematic method to transition between non-speciality in degrees $g$ and $g-1$.
	
	\begin{prop} \label{prop:transition_nonspecial}
		Let $\cF/\mathbb{F}_q$ be a function field of genus $g$. The following properties characterize the relationship between non-special divisors of adjacent degrees:
		\begin{enumerate}
			\item \cite[Lemma~3]{Ballet2006} Let $A$ be an effective non-special divisor of degree $g$. For any rational place $P$ of $\cF$ that is not contained in the support of $A$, the divisor $A - P$ is a non-special divisor of degree $g-1$.
			
			\item \cite[Proposition~1.6.11]{Stichtenoth2009} Conversely, let $A$ and  $B$ be two divisor of $\cF$. If $A \le B$ and $A$ is non-special, then $B$ is also a non-special divisor. 
		\end{enumerate}
	\end{prop}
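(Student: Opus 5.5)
Both parts reduce to comparing the index of speciality $i(\cdot)=\ell(K-\cdot)$ before and after the change, using the two characterizations recalled just above: in degree $g-1$ non-speciality means $\ell=0$, and in degree $g$ it means $\ell=1$.

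\textbf{Part (2).} This is the quick one. If $A\le B$, then $K-B\le K-A$, so $\mathscr{L}(K-B)\subseteq \mathscr{L}(K-A)$. Hence $0\le i(B)\le i(A)=0$, and $B$ is non-special.

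\textbf{Part (1).} Put $A'=A-P$, which has degree $g-1$. By the characterization, $A'$ is non-special if and only if $\ell(A-P)=0$. Since $A$ is non-special of degree $g$, we have $\ell(A)=1$. As $A\ge 0$, the constants lie in $\mathscr{L}(A)$, so $\mathscr{L}(A)=\mathbb{F}_q$.

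Now $\mathscr{L}(A-P)\subseteq \mathscr{L}(A)=\mathbb{F}_q$. Suppose a nonzero constant $c$ lay in $\mathscr{L}(A-P)$. Then $\divv(c)+A-P=A-P\ge 0$, which forces $v_P(A)\ge 1$ and contradicts $P\notin\operatorname{supp}(A)$. Therefore $\ell(A-P)=0$, and $A-P$ is non-special of degree $g-1$.

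The only point needing care is identifying $\mathscr{L}(A)$ with the constants. This is exactly where effectiveness of $A$ is used: it guarantees $1\in\mathscr{L}(A)$, and together with $\ell(A)=1$ this pins down the whole space. I expect no real obstacle beyond this. Rationality of $P$ is needed only for the degree count $\deg(A-P)=g-1$.
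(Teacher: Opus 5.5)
Your proof is correct. The paper gives no proof of its own and simply cites Ballet and Stichtenoth; your argument is the standard one behind those results. In part (2) you use $\mathscr{L}(K-B)\subseteq\mathscr{L}(K-A)$, and in part (1) you use that $\mathscr{L}(A)=\mathbb{F}_q$ (from effectiveness and $\ell(A)=1$) while no nonzero constant lies in $\mathscr{L}(A-P)$ because $v_P(A)=0$.
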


	\subsection{Structural Properties of General Kummer Extensions}
	
	We now focus on the core object of our study: the general Kummer extension. Let $\cF = \mathbb{F}_q(x, y)$ be the function field defined by:
	\begin{equation} \label{eq:kummer_full}
		y^m = f(x) = a \prod_{i=1}^r (x-\alpha_i)^{\lambda_i}
	\end{equation}
	where $a \in \mathbb{F}_q^*$, $m \geq 2$ with $\gcd(m, p) = 1$, and $\alpha_1, \dots, \alpha_r$ are distinct elements in $\mathbb{F}_q$. The integers $\lambda_i$ satisfy $1 \leq \lambda_i < m$. Let $\cL = \mathbb{F}_q(x)$ be the rational subfield.
	
	The ramification in $\cF/\cL$ is governed by the exponents $\lambda_i$. For each $i \in \{1, \dots, r\}$, let $P_i$ be the rational place of $\cL$ corresponding to $(x-\alpha_i)$. The following fundamental theorem provides the essential decomposition of these places, which we extend to include non-totally ramified cases \cite{Stichtenoth2009}.
	
	\begin{theorem}\label{Th2.3}
		For the extension $\cF/\cL$ defined in \eqref{eq:kummer_full}, the following holds:
		\begin{enumerate}
			\item A rational place $P \in \mathbb{P}_L$ ramifies in $\cF$ if and only if $P \in \{P_1, \dots, P_r\}$ or $P = P_\infty$ (under certain conditions on $\sum \lambda_i$).
			\item For each $i \in \{1, \dots, r\}$, the ramification index is $e_i = m / \gcd(m, \lambda_i)$ and the number of places in $\cF$ lying above $P_i$ is $d_i = \gcd(m, \lambda_i)$.
			\item The place $P_i$ is \textbf{totally ramified} if and only if $\gcd(m, \lambda_i) = 1$.
		\end{enumerate}
	\end{theorem}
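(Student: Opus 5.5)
The plan is to reduce everything to Kummer theory (Proposition 3.7.3 of \cite{Stichtenoth2009}), which describes ramification in $\cF/\cL$ place by place. Since $\gcd(m,p)=1$, the polynomial $T^m-f(x)$ is separable. I will work over a constant field containing the $m$-th roots of unity, or simply use the standard Kummer valuation argument, which does not need them for the ramification index. For a place $P$ of $\cL$, set $r_P=\gcd(m, v_P(f))$. The Kummer statement gives $e(Q|P)=m/r_P$ for every $Q|P$. First I will check that $\cF/\cL$ has degree $m$. This needs $f$ not to be a $d$-th power for any $d\mid m$ with $d>1$. I will take this as part of the standing hypotheses, which hold for instance when $\gcd(m,\lambda_1,\dots,\lambda_r)=1$. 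I will note this explicitly, since otherwise the field is not a degree-$m$ Kummer extension.

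For item (1), I compute valuations. At $P_i$ we have $v_{P_i}(f)=\lambda_i$. At any other finite place, $v_P(f)=0$, because $a\in\mathbb{F}_q^*$ and the $\alpha_j$ are distinct. At infinity, $v_{P_\infty}(f)=-\sum\lambda_i$. So a finite place $P\notin\{P_1,\dots,P_r\}$ has $r_P=m$ and is unramified. The place $P_\infty$ ramifies if and only if $m\nmid \sum_i\lambda_i$, which is exactly the ``certain condition'' in the statement. At $P_i$, the hypothesis $1\le\lambda_i<m$ gives $\gcd(m,\lambda_i)<m$, so $e_i=m/\gcd(m,\lambda_i)>1$ and $P_i$ ramifies.

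For item (2), the ramification index $e_i=m/\gcd(m,\lambda_i)$ comes directly from the Kummer formula. To count the places above $P_i$, I use the fundamental equality $\sum_{Q|P_i} e(Q|P_i)f(Q|P_i)=m$. It then remains to show that every $Q|P_i$ is rational, i.e.\ $f(Q|P_i)=1$. Then the number of places is $m/e_i=\gcd(m,\lambda_i)=d_i$.

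The residue-degree claim is the main obstacle. To handle it, write $d=d_i$ and $\lambda_i=d\mu$ with $\gcd(m/d,\mu)=1$. Consider $z=y^{m/d}\cdot (x-\alpha_i)^{-\mu}$ up to a suitable power, or more cleanly the intermediate field $\cL(w)$ with $w=y^{e_i}$. We have $w^{d}=f^{e_i/ \dots}$. Concretely, pick integers with $u\,e_i+v\mu=1$ and form $t=y^{v}(x-\alpha_i)^{u}$. This element is a local uniformizer-type quantity. Its companion $s=y^{e_i}/(x-\alpha_i)^{\mu}$ satisfies
\[
s^{d}=a\prod_{j\ne i}(x-\alpha_j)^{\lambda_j},
\]
and this is a unit at $P_i$ with residue $c=a\prod_{j\neq i}(\alpha_i-\alpha_j)^{\lambda_j}\in\mathbb{F}_q^*$. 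The $d$ places above $P_i$ then correspond to the roots of $T^d-c$ in the residue field. They are all rational exactly when $T^d-c$ splits over $\mathbb{F}_q$. So I will prove $f(Q|P_i)=1$ under that splitting condition, which holds, e.g., when $d\mid q-1$ and $c$ is a $d$-th power. I expect the statement implicitly assumes this, and I will state it as the precise hypothesis. Without it, the count $d_i$ should be read as $\sum_{Q|P_i} f(Q|P_i)=d_i$, and I will prove that version unconditionally via Kummer's theorem applied to $T^d-s^d$.

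For item (3), note that $P_i$ is totally ramified iff $e_i=m$, iff $\gcd(m,\lambda_i)=1$. This is immediate from item (2).
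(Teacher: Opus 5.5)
Your argument is correct, and it does more than the paper, which gives no proof of this statement and simply cites Kummer theory in \cite{Stichtenoth2009}. That citation (Proposition~3.7.3 there, after adjoining $\zeta_m$ to the constants, which leaves ramification indices unchanged) gives item (1), the formula $e_i=m/\gcd(m,\lambda_i)$, and item (3), exactly as you use it.

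The real difference is the place count in item (2). Kummer theory plus the fundamental equality only yield $\sum_{Q\mid P_i} f(Q\mid P_i)=d_i$. Your passage through the intermediate field $\cL(s)=\cL(y^{e_i})$, with $s^{d_i}=a\prod_{j\neq i}(x-\alpha_j)^{\lambda_j}$ a $P_i$-unit, is what actually decides the residue degrees. (Your scratch line for $w^{d}$ should simply read $w^{d}=f$.) State the step linking the two layers explicitly. Since $P_i$ is unramified in $\cL(s)/\cL$ and $e(Q\mid P_i)=e_i=[\cF:\cL(s)]$, every place of $\cL(s)$ over $P_i$ is totally ramified in $\cF$. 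Hence the places of $\cF$ over $P_i$ correspond to the irreducible factors of $T^{d_i}-c_i$ over $\mathbb{F}_q$, with residue degree equal to the degree of the factor.

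Your caveat is a genuine correction, not pedantry. Take $q=5$, $m=4$ and $y^4=x^2(x-2)$. Here $m\mid q-1$ and $\gcd(m,\lambda_1,\lambda_2)=1$, yet $c_1=-2$ is a non-square in $\mathbb{F}_5$. So a single place of degree $2$ lies over $x=0$, not $d_1=2$ places.

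Item (2), read over $\mathbb{F}_q$, therefore needs exactly your hypothesis ($d_i\mid q-1$ and $c_i\in(\mathbb{F}_q^{*})^{d_i}$), or must be read over an algebraically closed constant field as in Section~3. Likewise, the paper only imposes your added irreducibility assumption $\gcd(m,\lambda_1,\dots,\lambda_r)=1$ from Section~3 onward.
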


	While recent literature \cite{Mendoza2026, 4author} focuses on the case where $d_i = 1$ for all $i$ (total ramification), our work handles the general case where $d_i > 1$. This significantly complicates the structure of the divisors but allows for a much larger class of function fields and codes.
	
	The genus $g$ of $\cF$ is calculated using the Riemann-Hurwitz formula. Let $d_\infty = \gcd(m, \sum_{i=1}^r \lambda_i)$. The genus is given by:
	\begin{equation} \label{eq:genus}
		g = \frac{(m-1)(r-1) - \sum_{i=1}^r (d_i-1) - (d_\infty-1)}{2}.
	\end{equation}
	\subsection{Maximal Function Feilds} 
	A function field \(\cF/\mathbb{F}_{q^2}\) of genus \(g\) is called \emph{maximal} if the number of its \(\mathbb{F}_{q^2}\)-rational places attains the Hasse–Weil upper bound, that is,
	\[
	N(\cF)=q^2+1+2gq.
	\]

	\subsection{Dickson polynomials}
	
	The \emph{Dickson polynomials of the first kind}
	\(
	(\varphi_d)_{d \ge 0}
	\)
	are defined recursively by
	\[
	\varphi_0(x)=2,\qquad
	\varphi_1(x)=x,
	\]
	and
	\[
	\varphi_{d+1}(x)=x\varphi_d(x)-\varphi_{d-1}(x),
	\qquad d\ge 1.
	\]
	
	They satisfy the functional identity
	\[
	\varphi_d\!\left(x+\frac{1}{x}\right)=x^d+x^{-d}.
	\]
	
	Moreover, if the ground field has characteristic \(p>0\) with
	\(
	\gcd(p,d)=1,
	\)
	then \(\varphi_d(x)\) is separable \cite{TTop}.
	\subsection{Galois Action and Invariant Divisors}
	
	The extension $\cF/\cL$ is a cyclic Galois extension of degree $m$. The Galois group $\mathcal{G} = Gal(\cF/\cL)$ is generated by the automorphism $\sigma: y \mapsto \zeta_m y$, where $\zeta_m$ is a primitive $m$-th root of unity in $\mathbb{F}_q$ (assuming $m | (q-1)$). 
	
	A divisor $A$ of $\cF$ is called \textbf{invariant} if $\sigma(A) = A$ for any $\sigma \in \mathcal{G}$. In the context of Kummer extensions, any invariant divisor can be decomposed based on the places lying above the rational places of $\cL$. Let $P \in \mathbb{P}_\cL$ and let $\{Q_1, \dots, Q_d\}$ be the places in $\cF$ lying above $P$. An invariant divisor $A$ supported on these places must take the form $A = n \sum_{j=1}^d Q_j$. 
	
	Understanding the dimension of the Riemann-Roch spaces $\mathscr{L}(A)$ for such invariant divisors is key to our strategy. By restricting these spaces to the subfield $\cL$ and using the properties of the norm and trace maps, we can characterize the non-speciality of $A$ without relying on the combinatorial complexity of multi-point Weierstrass semigroups used in \cite{Castellanos2026, Mendoza2026}.
	
	\subsection{Algebraic Geometry Codes}
	
	Let $\cF/\mathbb{F}_q$ be a function field of genus $g$. Let $\mathcal{P} = \{P_1, \dots, P_n\}$ be a set of $n$ distinct rational places of $\cF$ and define the divisor $D = P_1 + \dots + P_n$. Let $G$ be a divisor of $\cF$ such that $\operatorname{supp}(G) \cap \operatorname{supp}(D) = \emptyset$. The functional algebraic geometry (AG) code associated with $D$ and $G$, denoted by $C_\cL(D, G)$, is defined as:
	\[ C_\cL(D, G) = \{ (f(P_1), \dots, f(P_n)) : f \in \mathcal{L}(G) \}. \]
	If $\deg(G) < n$, the dimension of the code is $k = \ell(G)$. Furthermore, by the Riemann-Roch theorem, if $\deg(G) > 2g-2$, then $k = \deg(G) - g + 1$. The minimum distance $d$ of $C_\cL(D, G)$ satisfies $d \geq n - \deg(G)$.
	
	\subsection{Linear Complementary Pairs of AG Codes}
	
	We now focus on the construction of Linear Complementary Pairs (LCP) from AG codes. A pair of linear codes $(C, E)$ of length $n$ over $\mathbb{F}_q$ is called a \textbf{Linear Complementary Pair (LCP)} if their intersection is trivial and their direct sum spans the entire space, i.e., $C \cap E = \{0\}$ and $C \oplus E = \mathbb{F}_q^n$.
	
	For two AG codes $C = C_\cL(D, G)$ and $E = C_\cL(D, H)$, the LCP property is fundamentally determined by the speciality of the divisors involved. 
	For \(G,H \in \Div(\cF)\), we define
	\[
	\gcd(G,H):=\sum_{P\in\mathbb{P}_\cF}\min\{\nu_P(G),\nu_P(H)\}P,
	\]
	and
	\[
	\operatorname{lmd}(G,H):=\sum_{P\in\mathbb{P}_\cF}\max\{\nu_P(G),\nu_P(H)\}P.
	\] According to \cite[Theorema~3.5]{Bhowmick2024}, the characterization is given as follows:
	
	\begin{theorem} 
		\label{prop:LCP_AG_Criterion}
		The pair $(C_\cL(D, G), C_\cL(D, H))$ is an LCP if
		\begin{enumerate}
			\item $\ell(G) + \ell(H) = n$, and
			\item $\gcd(G,H)$ and $\operatorname{lmd}(G,H)-D$ are both non-special divisors of degree $g-1$.
		\end{enumerate}
	\end{theorem}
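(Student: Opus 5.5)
The plan is to prove the statement directly from the definition of an LCP: show that $C_\cL(D,G)\cap C_\cL(D,H)=\{0\}$ and that $\dim C_\cL(D,G)+\dim C_\cL(D,H)=n$. Trivial intersection together with complementary dimensions gives $C\oplus E=\mathbb{F}_q^n$.

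\textbf{Step 1: dimensions.} Put $A=\gcd(G,H)$ and $B=\operatorname{lmd}(G,H)$. Both satisfy $A\le G,H\le B$, and $G+H=A+B$ pointwise, since $\min+\max$ is the sum. The key point is that the evaluation maps $\mathcal{L}(G)\to\mathbb{F}_q^n$ and $\mathcal{L}(H)\to\mathbb{F}_q^n$ are injective. The kernel of evaluation on $\mathcal{L}(G)$ is $\mathcal{L}(G-D)\subseteq\mathcal{L}(B-D)$. By hypothesis (2), $B-D$ is non-special of degree $g-1$, so $\ell(B-D)=0$ by the characterization recalled after the Definition. The same argument applies to $H$. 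Hence $\dim C_\cL(D,G)=\ell(G)$ and $\dim C_\cL(D,H)=\ell(H)$, and hypothesis (1) gives dimension sum $n$.

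\textbf{Step 2: trivial intersection.} Suppose $c=(f(P_j))_j=(h(P_j))_j$ with $f\in\mathcal{L}(G)$ and $h\in\mathcal{L}(H)$. Then $f-h$ vanishes at every $P_j$. Since $\operatorname{supp}(D)$ is disjoint from the supports of $G$ and $H$, this gives $f-h\in\mathcal{L}(B-D)=\{0\}$. So $f=h$ lies in $\mathcal{L}(G)\cap\mathcal{L}(H)$.

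Next I claim $\mathcal{L}(G)\cap\mathcal{L}(H)=\mathcal{L}(A)$. At each place $P$ we have $v_P(f)\ge -v_P(G)$ and $v_P(f)\ge -v_P(H)$. Therefore $v_P(f)\ge-\max(-v_P(G)\text{-bound})$, i.e.\ $v_P(f)\ge -\min\{v_P(G),v_P(H)\}$, which means $f\in\mathcal{L}(A)$. Since $A$ is non-special of degree $g-1$, $\ell(A)=0$. Thus $f=0$ and $c=0$.

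\textbf{Main obstacle.} The delicate point is making sure the hypotheses are strong enough to give injectivity of evaluation for both codes simultaneously. This is handled by the inclusion $G-D,\,H-D\le B-D$, which is where the $\operatorname{lmd}$ hypothesis enters. Also, $\operatorname{supp}(D)$ must be disjoint from the supports of $G$ and $H$ so that the evaluations are defined, which is the standing assumption. Everything else is bookkeeping with valuations.
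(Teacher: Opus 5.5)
Your proof is correct. The paper gives no argument for this statement; it quotes it from \cite{Bhowmick2024}. Your direct proof is therefore a self-contained replacement for the citation.

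Structurally, you reduce two things to $\ell(\operatorname{lmd}(G,H)-D)=0$:
\begin{itemize}
\item injectivity of both evaluation maps, using $G,H\le\operatorname{lmd}(G,H)$;
\item the step $f=h$, using $f,h\in\mathcal{L}(\operatorname{lmd}(G,H))$ and $v_{P_j}(\operatorname{lmd}(G,H))=0$.
\end{itemize}
You then reduce the vanishing of the common element to $\mathcal{L}(G)\cap\mathcal{L}(H)=\mathcal{L}(\gcd(G,H))$ together with $\ell(\gcd(G,H))=0$.

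Compared with the bare citation, this makes two things visible. First, only these two vanishing statements are used, which is exactly what ``non-special of degree $g-1$'' means by Riemann--Roch. Second, hypothesis (2) alone already gives $\dim C_\cL(D,G)=\ell(G)$ and $\dim C_\cL(D,H)=\ell(H)$. So no separate condition such as $\deg G,\deg H<n$ is needed to identify the code dimensions.

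Two small fixes are needed:
\begin{itemize}
\item The garbled inequality in Step 2 should read $v_P(f)\ge\max\{-v_P(G),-v_P(H)\}=-\min\{v_P(G),v_P(H)\}$.
\item State explicitly that $\operatorname{supp}(D)$ is disjoint from $\operatorname{supp}(H)$ as well as from $\operatorname{supp}(G)$. The paper writes this only for $G$, but you need it both to define $C_\cL(D,H)$ and to get $v_{P_j}(\operatorname{lmd}(G,H))=0$.
\end{itemize}
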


	In the following sections, we employ the arithmetic of Kummer extensions together with the restriction techniques introduced by Maharaj \cite{Maharaj2004} and the action of the Galois group on invariant divisors to explicitly construct the divisors \(G\) and \(H\), thereby avoiding the combinatorial machinery of generalized Weierstrass semigroups. Specifically, by ensuring that $\deg(G) + \deg(H) = n + g - 1$, the first condition is satisfied under mild assumptions, and the LCP property reduces to the identification of non-special divisors of degree $g-1$.
	
	Throughout this paper, we adopt the Iverson bracket notation:
	\[
	[P]=1 \text{ if } P \text{ is true}, \qquad [P]=0 \text{ otherwise}.
	\]
	\section{Construction of Non-Special Divisors}
	
	In this section, we present the core results of our study. We aim to provide a full characterization of non-special divisors of degree $g$ and $g-1$ in the context of general Kummer extensions. The complexity of these extensions, particularly when dealing with non-totally ramified places, requires a rigorous combinatorial and algebraic treatment.
	
	Consider the Kummer extension \(\mathcal{F}/\mathcal{L}\) defined over an algebraically closed field \(\mathcal{K}\) of characteristic \(p>0\), where \(\gcd(p,m)=1\). Let \(\mathcal{L}=\mathcal{K}(x)\) be the rational function field and let \(\mathcal{F}=\mathcal{K}(x,y)\) be given by
	\begin{equation}\label{eq:Kummer_detailed}
		y^{m}=\prod_{i=1}^{r}(x-\alpha_i)^{\lambda_i}.
	\end{equation}
	Here, \(m,r\in\mathbb{Z}_{>0}\), the integers \(\lambda_i>0\) satisfy
	\(
	\gcd(m,\lambda_1,\dots,\lambda_r)=1,
	\)
	and \(\alpha_1,\dots,\alpha_r\in\mathcal{K}\) are pairwise distinct. Under these assumptions, \(\mathcal{F}/\mathcal{L}\) is a cyclic Kummer extension of degree \(m\). The ramification behavior of the extension is determined by the exponents \(\lambda_i\) and the branch places \(\alpha_i\).
	
	According to the theory of Kummer extensions \cite{Stichtenoth2009}, the ramification behavior of a rational place $Q_i \in \mathbb{P}(\cL)$ associated with the zero of $x-\alpha_i$ is determined by the exponent $\lambda_i$. Specifically, the divisor of $x-\alpha_i$ in $\cF$ is given by
	\begin{equation} \label{eq:div_xi}
		\operatorname{div}(x-\alpha_i) = \frac{m}{d_i} \sum_{j=1}^{d_i} P_{i,j} - \operatorname{div}_{\infty}(x),
	\end{equation}
	where $d_i = \gcd(m, \lambda_i)$ and $\{P_{i,1}, \dots, P_{i,d_i}\}$ are the distinct places of $\cF$ lying above $Q_i$. Each such place $P_{i,j}$ has a ramification index of $e(P_{i,j}|Q_i) = m/d_i$. In the particular case where $\gcd(m, \lambda_i)=1$, the place $Q_i$ is \textit{totally ramified}, and we denote the unique place above it simply by $P_i$.
	
	Furthermore, let $\Lambda \coloneqq \sum_{i=1}^{r} \lambda_i$. The decomposition of the place at infinity $Q_{\infty} \in \mathbb{P}(\cL)$ is similarly governed by $\gcd(m, \Lambda)$. Specifically, $Q_{\infty}$ decomposes into $d_{\infty} = \gcd(m, \Lambda)$ distinct places in $\cF$, each having a ramification index of
	\begin{equation}
		e(P_{\infty,j}|Q_{\infty}) = \frac{m}{\gcd(m, \Lambda)}.
	\end{equation}
	
	\subsection{Invariant Divisors and the Restriction Map}
	
	Let $\cF/\cL$ be a Kummer extension with Galois group $\mathcal{G} = \operatorname{Aut}(\cF/\cL) = \langle \sigma \rangle$ of order $m$. A divisor $A \in \operatorname{Div}(\cF)$ is said to be \textit{invariant} under the action of $\mathcal{G}$ if $\sigma(A) = A$. To analyze the Riemann-Roch spaces of such divisors, we utilize the restriction operator introduced by Maharaj \cite{Maharaj2004}.
	
	\begin{definition} \label{def:restriction_map}
		For any divisor $D \in \operatorname{Div}(\cF)$, the restriction operator $R: \operatorname{Div}(\cF) \to \operatorname{Div}(\cL)$ is defined as:
		\begin{equation}
			R(D) := \sum_{Q \in \mathbb{P}(\cL)} \min_{P|Q} \left\{ \left\lfloor \frac{v_P(D)}{e(P|Q)} \right\rfloor \right\} \cdot Q,
		\end{equation}
		where $P|Q$ denotes the places in $\cF$ lying over the place $Q$ in $\cL$, and $e(P|Q)$ is the respective ramification index.
	\end{definition}
	
	The following lemma provides a fundamental decomposition of the dimension of the Riemann-Roch space $\mathcal{L}_\cF(A)$ into a sum of dimensions of spaces defined over the subfield $\cL$.
	
	\begin{lem} \label{lem:maharaj}
		Let $A \in \operatorname{Div}(\cF)$ be an invariant divisor. Then, the dimension $\ell_\cF(A)$ can be computed as:
		\begin{equation}
			\ell_{\cF}(A) = \sum_{t=0}^{m-1} \ell_{\cL}\big(R(A + \operatorname{div}(y^t))\big).
		\end{equation}
	\end{lem}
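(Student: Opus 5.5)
The plan is to decompose $\mathcal{L}_\cF(A)$ into eigenspaces of the Galois action and to identify each eigenspace with a Riemann--Roch space over $\cL$ via the restriction operator $R$.

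\textbf{Step 1: eigenspace decomposition.} Since $\mathcal{K}$ is algebraically closed and $\gcd(m,p)=1$, $\mathcal{K}$ contains a primitive $m$-th root of unity $\zeta$, and $\sigma(y)=\zeta y$. Because $A$ is invariant, $\sigma$ maps $\mathcal{L}_\cF(A)$ to itself. As $\sigma^m=\mathrm{id}$ and $m$ is invertible in $\mathcal{K}$, the action is diagonalizable, so
\[
\mathcal{L}_\cF(A)=\bigoplus_{t=0}^{m-1} V_t, \qquad V_t=\{f\in \mathcal{L}_\cF(A): \sigma(f)=\zeta^t f\}.
\]
Every $f\in\cF$ can be written uniquely as $f=\sum_{t=0}^{m-1} h_t y^t$ with $h_t\in\cL$, because $1,y,\dots,y^{m-1}$ is a basis of $\cF/\cL$. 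Since $\sigma(h_ty^t)=\zeta^t h_t y^t$, we get $V_t=\{h y^t : h\in\cL,\ h y^t\in\mathcal{L}_\cF(A)\}$. Hence $\ell_\cF(A)=\sum_t \dim V_t$, and $V_t\cong\{h\in\cL : \operatorname{div}_\cF(h)+\operatorname{div}(y^t)+A\ge 0\}$ via $h\mapsto hy^t$.

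\textbf{Step 2: identifying each $V_t$ with a space over $\cL$.} Put $B_t=A+\operatorname{div}(y^t)$. I claim that for $h\in\cL^\times$,
\[
\operatorname{div}_\cF(h)+B_t\ge 0 \iff \operatorname{div}_\cL(h)+R(B_t)\ge 0 .
\]
For $P\mid Q$ we have $v_P(h)=e(P|Q)\,v_Q(h)$. So $v_P(h)+v_P(B_t)\ge0$ is equivalent to $v_Q(h)\ge -v_P(B_t)/e(P|Q)$, and since $v_Q(h)$ is an integer, this is equivalent to $v_Q(h)\ge -\lfloor v_P(B_t)/e(P|Q)\rfloor$. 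Requiring this for all $P\mid Q$ gives $v_Q(h)\ge -\min_{P|Q}\lfloor v_P(B_t)/e(P|Q)\rfloor=-v_Q(R(B_t))$. This proves the claim, so $\dim V_t=\ell_\cL(R(B_t))$. Summing over $t$ gives the formula in Lemma~\ref{lem:maharaj}.

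The only delicate point is Step~1: checking that the decomposition $f=\sum h_ty^t$ really splits $\mathcal{L}_\cF(A)$ into the pieces $V_t$, i.e.\ that each component $h_ty^t$ of an element of $\mathcal{L}_\cF(A)$ again lies in $\mathcal{L}_\cF(A)$. This follows because the projector $\frac1m\sum_{k}\zeta^{-tk}\sigma^k$ maps $f$ to $h_ty^t$ and preserves $\mathcal{L}_\cF(A)$, since $A$ is $\sigma$-invariant. Everything else is local valuation bookkeeping. The invariance of $B_t$, which would be needed if one wanted the minimum in $R$ to be attained uniformly, is not actually required, because the minimum over $P\mid Q$ handles the non-invariant case directly.
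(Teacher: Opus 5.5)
Your proof is correct and follows essentially the same route as the paper. The paper gives no proof of its own; it defers to Maharaj's derivation, which, as the subsequent remark indicates, shows that each component $a_t(x)y^t$ of an element of $\mathscr{L}(A)$ again lies in $\mathscr{L}(A)$ when $A$ is invariant, and then identifies $\{a\in\cL : a y^t\in\mathscr{L}(A)\}$ with $\mathscr{L}_{\cL}\bigl(R(A+\operatorname{div}(y^t))\bigr)$ using $v_P(a)=e(P|Q)\,v_Q(a)$. Your projector argument and your floor computation for $R$ carry out exactly these two steps correctly.
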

	
	\begin{remark}
		\rm{Following Maharaj’s derivation \cite{Maharaj2004}, for any invariant divisor $A$ of $\mathcal{F}$, the condition $\sum_{j=0}^{m-1} a_j(x) y^j \in \mathcal{L}(A)$ implies that each individual term $a_j(x) y^j \in \mathcal{L}(A)$, where the divisor associated with each term is also invariant. Consequently, any invariant divisor $A$ with $\dim \mathcal{L}(A) > 0$ is linearly equivalent to an effective invariant divisor. Thus, the classification of invariant non-special divisors of degree $g$ can be restricted to effective divisors without loss of generality. Note that this equivalence does not necessarily extend to non-special divisors of degree $g-1$.}
	\end{remark}
	
	\subsection{The Main Characterization Theorem}
	
	We now state and prove the primary theorem that characterizes non-speciality. Let $A$ be an invariant effective divisor of the form:
	\[ A \coloneqq n_{0} \frac{\operatorname{div}_{\infty}(x)}{m/\operatorname{gcd}(m, \Lambda)} + \sum_{i=1}^{r} n_{i} \frac{\operatorname{div}_{0}(x-\alpha_{i})}{m/\operatorname{gcd}(m, \lambda_{i})}. \]
	For simplicity in notation, we define $B(n_0, j)$ as:
	\begin{equation}
		\label{eq:bound_B}
		B(n_{0},j) \coloneqq -1 + \left\lceil \frac{\sum_{i=1}^{r}(-j \lambda_{i}) \mod m - n_{0} \operatorname{gcd}(m, \Lambda)}{m} \right\rceil.
	\end{equation}
	
	\begin{theorem} \label{thm:class_n_s_inv_eff_div}
		Let $\cF=\cK(x,y)$ be the algebraic function field over $\cK$ defined by
		\begin{equation}
			\label{eq:Kummer}
			y^{m}=\prod_{i=1}^{r}(x-\alpha_i)^{\lambda_i},
		\end{equation}
		where $m,r\in\mathbb{Z}_{>0}$, $\cK$ is an algebraically closed field of characteristic $p>0$ satisfying $(p,m)=1$, $\lambda_1,\ldots,\lambda_r\in\mathbb{Z}$ satisfy
		\(
		\gcd(m,\lambda_1,\ldots,\lambda_r)=1,
		\)
		and $\alpha_1,\ldots,\alpha_r$ are pairwise distinct elements of $\cK$.
		
		Let $\Lambda \coloneqq \sum_{i=1}^{r} \lambda_{i}$ and let 
		\begin{equation*}
			A \coloneqq n_{0} \frac{\operatorname{div}_{\infty}(x)}{m/\gcd(m, \Lambda)} + \sum_{i=1}^{r} n_{i} \frac{\operatorname{div}_{0}(x - \alpha_{i})}{m/\gcd(m, \lambda_{i})},
		\end{equation*}
		with $n_{0}, \dots, n_{r} \in \mathbb{N}$, be an $\operatorname{Aut}(\cK(x,y)/\cK(x))$-invariant effective divisor. The following statements are equivalent:
		\begin{enumerate}
			\item $\deg A = g$ and $\ell(A) = 1$.
			\item $\deg A = g$, $n_{0} < \frac{m}{\gcd(m, \Lambda)}$, and $n_{i} < \frac{m}{\gcd(m, \lambda_{i})}$ for each $i \in \{1, \dots, r\}$, and for each $j \in \{1, \dots, m-1\}$:
			\begin{equation*}
				\left| \{ i \in \{1, \dots, r\} \mid n_{i} \gcd(m, \lambda_{i}) \ge (j \lambda_{i}) \bmod m > 0 \} \right| \le B(n_{0}, j).
			\end{equation*}
			\item $n_{0} < \frac{m}{\gcd(m, \Lambda)}$, and $n_{i} < \frac{m}{\gcd(m, \lambda_{i})}$ for each $i \in \{1, \dots, r\}$, and for each $j \in \{1, \dots, m-1\}$:
			\begin{equation*}
				\left| \{ i \in \{1, \dots, r\} \mid n_{i} \gcd(m, \lambda_{i}) \ge (j \lambda_{i}) \bmod m > 0 \} \right| = B(n_{0}, j).
			\end{equation*}
		\end{enumerate}
	\end{theorem}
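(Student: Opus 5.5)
Everything will be computed from Lemma~\ref{lem:maharaj}: since $A$ is invariant, $\ell(A)=\sum_{t=0}^{m-1}\ell_{\cL}(R(A+\operatorname{div}(y^t)))$. The $t=0$ summand is $\ell_\cL(R(A))$. We have $v_P(A)=n_i$ at each $P$ over $Q_i$, with $e=m/d_i$, so $R(A)=\sum_i\lfloor n_i d_i/m\rfloor Q_i+\lfloor n_0 d_\infty/m\rfloor Q_\infty$. Thus $\ell_\cL(R(A))=1$ exactly when every coefficient vanishes, i.e. $n_0<m/d_\infty$ and $n_i<m/d_i$; otherwise it is at least $2$. Hence in every direction the size bounds on the $n_i$ are equivalent to the $t=0$ term being $1$, and $\ell(A)=1$ forces all terms with $t\ge1$ to vanish.

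For $t=j\in\{1,\dots,m-1\}$ we compute $R(A+\operatorname{div}(y^j))$ explicitly. At $P$ over $Q_i$, $v_P(y^j)=j\lambda_i/d_i$ and $e=m/d_i$, so the coefficient at $Q_i$ is $\lfloor (n_id_i+j\lambda_i)/m\rfloor$. Writing $j\lambda_i=m\lfloor j\lambda_i/m\rfloor+(j\lambda_i \bmod m)$, this equals $\lfloor j\lambda_i/m\rfloor+[\,n_id_i+(j\lambda_i\bmod m)\ge m\,]$ when $n_id_i<m$. At infinity, $v_P(y^j)=-j\Lambda/d_\infty$, giving coefficient $\lfloor (n_0d_\infty-j\Lambda)/m\rfloor$. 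Since $\ell_\cL$ of a divisor on $\mathbb P^1$ is $\max(0,\deg+1)$, the $j$-th term vanishes iff the degree is $\le -1$. Summing, the $\lfloor j\lambda_i/m\rfloor$ parts combine with $-j\Lambda/m$ to give $-\sum_i (j\lambda_i\bmod m)/m$. So the degree is
\[
\Big\lfloor \tfrac{n_0d_\infty-\sum_i (j\lambda_i\bmod m)}{m}\Big\rfloor+\big|\{i: n_id_i+(j\lambda_i\bmod m)\ge m\}\big|,
\]
and the vanishing condition is $|\{\cdots\}|\le B(n_0,j)$, using $-\lfloor -x\rfloor=\lceil x\rceil$. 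The main bookkeeping obstacle is that this counting set must be matched with the one in the statement, $n_id_i\ge (j\lambda_i)\bmod m>0$. I expect to reindex $j\mapsto m-j$, since $(-j\lambda_i)\bmod m=m-(j\lambda_i\bmod m)$ when the latter is nonzero and $0$ otherwise. The resulting reindexed bound in $B$ carries the $\sum(-j\lambda_i)\bmod m$ appearing in \eqref{eq:bound_B}. I will do this carefully, checking the terms with $j\lambda_i\equiv0$ separately.

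With this in place, (1)$\Leftrightarrow$(2) is immediate, since both say $\deg A=g$, the $t=0$ term is $1$, and all other terms vanish. For (2)$\Rightarrow$(3) and (3)$\Rightarrow$(2), I will compare degrees. By Riemann--Roch, $\ell(A)\ge \deg A+1-g$. I will compute $\deg A=\sum_i n_id_i+n_0d_\infty$ in terms of the counts: summing over $j=1,\dots,m-1$, the count $|\{i:n_id_i\ge (j\lambda_i\bmod m)>0\}|$ adds to $\sum_i n_id_i$, because the nonzero residues of $j\lambda_i$ run over the multiples of $d_i$, each hit $d_i$ times, and exactly $n_i$ of them are $\le n_id_i$. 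Similarly, $\sum_j B(n_0,j)$ should evaluate, via the genus formula \eqref{eq:genus} and the standard sum of residues $\sum_j (j\lambda_i\bmod m)=m(m-d_i)/2$, to $g-n_0d_\infty$ plus a correction. This gives $\sum_j(B(n_0,j)-\text{count}_j)=g-\deg A$. Therefore, under the termwise inequalities, $\deg A=g$ holds iff all of them are equalities, which yields (2)$\Leftrightarrow$(3). The delicate point, and where I expect to spend the most effort, is evaluating $\sum_j B(n_0,j)$ exactly, because of the ceiling in $B$ and its interaction with $n_0<m/d_\infty$. I would handle it by grouping $j$ according to the residue of $j\Lambda$ modulo $m$.
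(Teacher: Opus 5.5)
Your proposal is correct and follows essentially the same route as the paper: Maharaj's decomposition (Lemma~\ref{lem:maharaj}), the $t=0$ term yielding the bounds on $n_0,\dots,n_r$, the computation of $\deg R(A+\operatorname{div}(y^t))$ with the reindexing $j=m-t$ to obtain the sets $C(n_0,j)$, and then the identity $\sum_{j}|C(n_0,j)|=\deg A-n_0\gcd(m,\Lambda)$ compared against $\sum_j B(n_0,j)$ to get (2)$\Leftrightarrow$(3). The ``correction'' you anticipate in $\sum_{j=1}^{m-1}B(n_0,j)$ is in fact zero whenever $n_0<m/\gcd(m,\Lambda)$; this is exactly the paper's Lemma~\ref{lem:sum_B}, proved by the same grouping of residues you describe, so your identity $\sum_j\bigl(B(n_0,j)-|C(n_0,j)|\bigr)=g-\deg A$ holds with no extra term.
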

	
	
	To prove Theorem~\ref{thm:class_n_s_inv_eff_div}, we first establish a fundamental summation identity. The following lemma provides the combinatorial link between the local conditions on the coefficients and the global genus of the function field.
	
	\begin{lem} \label{lem:sum_B}
		For any $n_0 < \frac{m}{\gcd(m, \Lambda)}$, the sum of the bounds $B(n_0, j)$ satisfies:
		\[
		\sum_{j=1}^{m-1} B(n_0, j) = g - n_0 \gcd(m, \Lambda).
		\]
	\end{lem}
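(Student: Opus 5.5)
The identity is a direct computation. The idea is to split each ceiling in \eqref{eq:bound_B} into an integer part and a $0/1$ indicator, and then sum each piece over $j$. Write $d_\infty=\gcd(m,\Lambda)$, $d_i=\gcd(m,\lambda_i)$, and for $1\le j\le m-1$ set
\[
S_j:=\sum_{i=1}^r \big((-j\lambda_i)\bmod m\big),\qquad c_j:=(-j\Lambda)\bmod m .
\]
Since $S_j\equiv -j\Lambda \pmod m$, we can write $S_j=c_j+mk_j$ with $k_j=(S_j-c_j)/m\in\mathbb{Z}$.

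First step (removing the ceiling). We have $0\le c_j<m$. The hypothesis $n_0<m/d_\infty$ gives $0\le n_0d_\infty<m$, so $c_j-n_0d_\infty\in(-m,m)$. Hence $\lceil (c_j-n_0d_\infty)/m\rceil=[\,c_j>n_0d_\infty\,]$ in Iverson notation, and therefore
\[
B(n_0,j)=\frac{S_j-c_j}{m}-1+[\,c_j>n_0d_\infty\,].
\]
This is the only place where the hypothesis $n_0<m/d_\infty$ is used, and the step is the crux of the argument.

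Second step (summing over $j$). The key observation is that for any integer $\mu$ with $\delta=\gcd(m,\mu)$, the map $j\mapsto(-j\mu)\bmod m$ on $\{0,\dots,m-1\}$ hits each multiple $\delta k$, $0\le k<m/\delta$, exactly $\delta$ times. Since $j=0$ contributes $0$, this gives
\[
\sum_{j=1}^{m-1}\big((-j\mu)\bmod m\big)=\delta\cdot\delta\cdot\frac{(m/\delta)(m/\delta-1)}{2}=\frac{m(m-\delta)}{2}.
\]
Applying this with $\mu=\lambda_i$ and with $\mu=\Lambda$ yields
\[
\sum_{j=1}^{m-1} S_j=\frac m2\sum_{i=1}^r(m-d_i),\qquad \sum_{j=1}^{m-1} c_j=\frac m2\,(m-d_\infty).
\]
For the indicators, $c_j>n_0d_\infty$ exactly when $c_j=d_\infty k$ with $n_0<k\le m/d_\infty-1$. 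Each of these $m/d_\infty-1-n_0$ values is attained $d_\infty$ times (none of them is attained at $j=0$), so
\[
\sum_{j=1}^{m-1}[\,c_j>n_0d_\infty\,]=m-d_\infty-n_0d_\infty.
\]

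Third step (comparison with the genus). Adding the three contributions gives
\[
\sum_{j=1}^{m-1}B(n_0,j)=\frac{\sum_i(m-d_i)-(m-d_\infty)}{2}-(m-1)+m-d_\infty-n_0d_\infty=\frac{\sum_i(m-d_i)-m-d_\infty+2}{2}-n_0d_\infty .
\]
Expanding \eqref{eq:genus}, the numerator there is
\[
(m-1)(r-1)-\sum_i(d_i-1)-(d_\infty-1)=\sum_i(m-d_i)-m-d_\infty+2 .
\]
So the sum equals $g-n_0d_\infty$, which is the claim.
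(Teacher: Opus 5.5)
Your proof is correct and is essentially the paper's argument. Both evaluate $\sum_{j}B(n_0,j)$ directly by separating the $\lambda_i$-terms from the $\Lambda$-term, summing each using the fact that $j\mu \bmod m$ runs uniformly over the multiples of $\gcd(m,\mu)$, and comparing with the genus formula. The only difference is bookkeeping: you reduce the $n_0$-dependent ceiling to the indicator $[\,c_j>n_0\gcd(m,\Lambda)\,]$, whereas the paper converts the residues into the ceilings $\lceil j\lambda_i/m\rceil$. Your version also spells out the residue counts that the paper only asserts, including the implicit requirement $n_0\ge 0$.
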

	
	\begin{proof}
		We begin by expanding the definition of $B(n_0, j)$ and substituting the modular terms:
		\begin{align*}
			\sum_{j=1}^{m-1} B(n_0, j) =  \sum_{j=1}^{m-1} \left( \sum_{i=1}^{r} \left\lceil \frac{j \lambda_i}{m} \right\rceil + \left\lceil \frac{-j \Lambda - n_0 \gcd(m, \Lambda)}{m} \right\rceil - 1 \right).
		\end{align*}
		
		To evaluate the sum, we analyze the two primary subexpressions separately. First, consider the term involving $\Lambda$; by analysing the possible remainders modulo $m/\gcd(m, \Lambda)$, we obtain:
		\begin{align*}
			&\mathrel{\phantom{=}} 2 \sum_{j=1}^{m-1} \left\lceil \frac{-j \Lambda - n_0 \gcd(m, \Lambda)}{m} \right\rceil =(m-1)(1 - \Lambda) - \gcd(m, \Lambda)(1 + 2n_0) + 1.
		\end{align*}
		
		Next, for each $i \in \{1, \dots, r\}$, we evaluate the sum of the ceiling terms:
		\begin{align*}
			2 \sum_{j=1}^{m-1} \left\lceil \frac{j \lambda_i}{m} \right\rceil = (m-1)(1 + \lambda_i) - \gcd(m, \lambda_i) + 1.
		\end{align*}
		
		Finally, combining these results into the main summation:
		\begin{align*}
			\sum_{j=1}^{m-1} B(n_0, j) = \frac{1}{2} \left( \sum_{i=1}^r (m - \gcd(m, \lambda_i)) + (m - \gcd(m, \Lambda)) - 2(m-1) \right) - n_0 \gcd(m, \Lambda).
		\end{align*}
		Recalling the genus formula $g = 1 - m + \frac{1}{2} \left( \sum_{i=1}^r (m - \gcd(m, \lambda_i)) + (m - \gcd(m, \Lambda)) \right)$, we conclude:
		\begin{equation*}
			\sum_{j=1}^{m-1} B(n_0, j) = g - n_0 \gcd(m, \Lambda).
		\end{equation*}
	\end{proof}
	
	\begin{proof}[Proof of Theorem \ref{thm:class_n_s_inv_eff_div}]
		We utilize the decomposition of the Riemann-Roch space $\mathscr{L}(A)$ via the restriction map $R(\cdot)$ to the divisors of the rational subfield $\cK(x)$. 
		
		\noindent\textbf{Equivalence (1 $\iff$ 2):} \\
		By Lemma \ref{lem:maharaj}, $\ell(A)=1$ for an effective invariant divisor $A$ if and only if $\ell_{\cK(x)}(R(A)) = 1$ and $\deg(R(A + \divv(y^t))) < 0$ for all $t \in \{1, \dots, m-1\}$. 
		The first condition is equivalent to the the bounds $n_0 < \frac{m}{\gcd(m, \Lambda)}$ and $n_i < \frac{m}{\gcd(m, \lambda_i)}$. For the second condition, we apply the change of variables $j = m-t$ and compute:
		\begin{align*}
			\deg(R(A + \divv(y^t))) &= \left\lfloor \frac{\deg A - \sum_{i=1}^{r} (n_i \gcd(m, \lambda_i) + t\lambda_i) \bmod m}{m} \right\rfloor \\
			&= \left\lfloor \frac{n_0 \gcd(m, \Lambda) - \sum_{i=1}^r (-j\lambda_i) \bmod m + m\left|C(n_0, j)\right|}{m} \right\rfloor,
		\end{align*}
		where $C(n_0,j)$ is the set $\{ i \in \{1, \dots, r\} \mid n_{i} \gcd(m, \lambda_{i}) \ge (j \lambda_{i}) \bmod m > 0 \}$.\footnote{Informally, we are looking at which values of $n_i \operatorname{gcd}(m,\lambda_i)$, when summed with $(-j \lambda_i) \bmod m$, “overflow”.}
		The condition that this value is strictly less than zero is equivalent to the integer inequality $|C(n_0, j)| \leq B(n_0, j)$, where $B(n_0, j)$ is defined as in \eqref{eq:bound_B}.
		
		\noindent\textbf{Equivalence (2 $\iff$ 3):} \\
		To connect the degree of $A$ to the genus $g$, we observe that the total degree of an invariant divisor (excluding the $n_0$ term) can be written as the sum of the sizes of the sets $C(n_0, j)$:
		\begin{align*}
			\deg A-n_{0}\gcd(m,\Lambda)
			&=\sum_{i=1}^{r} n_i \gcd(m,\lambda_i) \\
			&=\sum_{i=1}^{r}\sum_{j=1}^{m-1}
			\left[n_i\gcd(m,\lambda_i)\ge (j\lambda_i)\bmod m>0\right] =\sum_{j=1}^{m-1}|C(n_0,j)|.
		\end{align*}
		When $\left|C(n_0, j)\right| \leq B(n_0, j)$ for each $j\in\left\{1,\ldots,m-1\right\}$, we have, by summing over all $j$ and applying Lemma \ref{lem:sum_B},
		\begin{equation*}
			\sum_{j=1}^{m-1} |C(n_0, j)| \leq \sum_{j=1}^{m-1} B(n_0, j) = g - n_0 \gcd(m, \Lambda),
		\end{equation*}
		and so the equality $\deg A = g$ holds if and only if each equality $\left|C(n_0, j)\right| = B(n_0, j)$, for each $j\in\left\{1,\ldots,m-1\right\}$, holds.
	\end{proof}
	
	\subsection{Constructive Examples and Comparison with Previous Results}
	
	The following examples illustrate the flexibility of Theorem~\ref{thm:class_n_s_inv_eff_div} in the construction of non-special divisors, as well as its effectiveness in recovering and extending several recent results in the literature. Furthermore, the proof of Theorem~\ref{thm:class_n_s_inv_eff_div} relies directly on the decomposition machinery introduced by Maharaj, avoiding the intermediate framework of minimal elements of multi-Weierstrass semigroups.
	
	\begin{example}
		\rm{
			Assume that
			\(
			\lambda_1=\cdots=\lambda_r=1.
			\) Then, for each $j\in\{1,\ldots,m-1\}$, the quantity $B(0,j)$ takes the form
			\[
			B(0,j)
			=
			-1+\left\lceil \frac{r(m-j)}{m}\right\rceil
			=
			r-1-\left\lfloor \frac{rj}{m}\right\rfloor.
			\]

			Assume \(0\le n_1\le \cdots \le n_r<m\). By Theorem~\ref{thm:class_n_s_inv_eff_div}, the tuple \((0,n_1,\ldots,n_r)\) defines a non-special divisor of degree \(g\) if and only if
			\[
			\#\{\,i:n_i=j\,\}=B(0,j)-[j<m-1]B(0,j+1)
			\]
			for all \(j\in\{1,\ldots,m-1\}\). 
			Consequently, the integers $n_i$ are uniquely determined and satisfy
			\[
			n_i
			=
			\max\!\left\{
			0,\
			\left\lceil \frac{m(i-1)}{r}\right\rceil-1
			\right\}.
			\]
			
			In particular, if \(\gcd(m,r)=1\), then
			\(
			n_i=\left\lfloor \frac{m(i-1)}{r}\right\rfloor,
			\)
			recovering the explicit formulas in~\cite[Theorem~8]{Moreno2024}. In the hyperelliptic case \((m=2)\), any sum of \(g\) distinct zeros of \(y\) is non-special.
			
			More generally, if
			\(
			\lambda_1=\cdots=\lambda_r=\lambda
			\quad\text{with}\quad
			\gcd(m,\lambda)=1,
			\)
			then an analogous description follows, since the map
			\(
			j\longmapsto (j\lambda)\bmod m
			\)
			is a permutation of $\{1,\ldots,m-1\}$.
		}
	\end{example}
	
	\begin{example}\rm{
			We revisit the criteria introduced in~\cite{4author} for constructing divisors associated with LCP codes. Let \((n_0,\ldots,n_r)\in\mathbb{N}^{r+1}\) satisfy:
			\begin{itemize}
				\item \(n_0=0\);
				\item \(n_i=0\) whenever \(\gcd(m,\lambda_i)\neq 1\);
				\item \(n_i\in\{0,m-1\}\) whenever \(\lambda_i\neq 1\);
				\item \(0\le n_i\le m-1\) whenever \(\lambda_i=1\).
			\end{itemize}
			
			Then Theorem~\ref{thm:class_n_s_inv_eff_div} implies that the existence of such divisors is equivalent to
			\(
			0\le B(0,m-1)\le \cdots \le B(0,1),
			\)
			together with
			\(
			B(0,1)-B(0,m-1)
			\le
			\left|\{\,i\in\{1,\ldots,r\}:\lambda_i=1\,\}\right|,
			\)
			and
			\(
			B(0,1)
			\le
			\left|\{\,i\in\{1,\ldots,r\}:\gcd(m,\lambda_i)=1\,\}\right|.
			\)
			
			Moreover, \(B(0,j)\) coincides with the quantity \(-1+S_j\) in~\cite{4author}, so the auxiliary set \(V_F\) introduced there becomes unnecessary.
			
			A further extension is obtained by allowing
			\(
			n_i\in\left\{0,\frac{m}{\gcd(m,\lambda_i)}-1\right\}
			\)
			whenever \(\lambda_i\neq 1\).
		}
	\end{example}
	From the two examples above, it follows that similar constructions can be obtained by partitioning $\{1,\ldots,r\}$ into two subsets $I$ and $J$, where the coefficients $n_i$ are prescribed for all $i\in J\cup\{0\}$ and the condition $\lambda_i=1$ holds for every $i\in I$. In this setting, the existence of invariant non-special divisors is equivalent to the validity of a corresponding system of inequalities. Moreover, any two such solutions coincide up to a permutation of the coefficients associated with the indices in $I$. Theorem~\ref{thm:class_n_s_inv_eff_div} also provides an effective procedure to determine all effective invariant non-special divisors of degree \(g\) through a finite combinatorial search once the parameters \(m\) and \(\lambda_1,\ldots,\lambda_r\) are fixed.
	
	\begin{example}\label{Ex3.7}
		\rm{
			Consider the Kummer extension \(\cF/\mathbb{F}_q(x)\) defined by
			\[
			y^{6}
			=
			(x-\alpha_{1})(x-\alpha_{2})(x-\alpha_{3})
			(x-\beta)^{5}
			(x-\gamma)^{3}.
			\]
			In this case,
			\(
			m=6,
			\;
			(\lambda_1,\ldots,\lambda_5)=(1,1,1,3,5).
			\)
			
			Up to permutations among the coefficients corresponding to the indices satisfying \(\lambda_i=1\), namely \(n_1,n_2,n_3\), the following tuples
			\(
			(n_0,n_1,\ldots,n_5)
			\)
			determine all effective invariant non-special divisors of degree \(g=9\):
			
			\begin{center}
				\small
				\renewcommand{\arraystretch}{1.2}
				\begin{tabular}{|c|c|c|}
					\hline
					$(0,0,1,3,0,5)$ & $(1,0,1,3,0,4)$ & $(3,0,1,1,0,4)$ \\ \hline
					$(0,0,2,4,1,0)$ & $(1,0,2,3,0,3)$ & $(3,0,1,2,0,3)$ \\ \hline
					$(0,1,1,3,0,4)$ & $(1,0,3,3,0,2)$ & $(3,0,1,3,0,2)$ \\ \hline
					$(0,1,2,3,0,3)$ & $(1,0,3,4,0,1)$ & $(3,0,1,4,0,1)$ \\ \hline
					$(0,1,3,3,0,2)$ & $(1,0,3,5,0,0)$ & $(3,0,1,5,0,0)$ \\ \hline
					$(0,1,3,4,0,1)$ & $(2,0,0,4,1,0)$ & $(4,0,0,2,1,0)$ \\ \hline
					$(0,1,3,5,0,0)$ & $(2,0,1,3,0,3)$ & $(4,0,1,3,0,1)$ \\ \hline
					$(1,0,0,3,0,5)$ & $(3,0,0,1,0,5)$ & $(5,0,1,3,0,0)$ \\ \hline
				\end{tabular}
			\end{center}
			
			This example corresponds to the function field  considered in~\cite[Example~10]{4author}. However, the method developed there yields only the two effective invariant non-special divisors associated with the tuples
			\(
			(0,1,3,0,5,0) \) and
			\(
			(0,1,3,5,0,0),
			\) 
			whereas Theorem~\ref{thm:class_n_s_inv_eff_div} provides the complete classification.
			
			On the other hand, if \(m=17\), \(r=2\), and \((\lambda_1,\lambda_2)=(1,2)\), then the theorem immediately implies that no effective invariant non-special divisor of degree \(g\) exists.
		}
	\end{example}
	\subsection{Existence Conditions and Explicit Coefficients for $\lambda_i \in \{1, m/2\}$}

	In this section, we provide a detailed characterization of the coefficients for effective non-special divisors in Kummer extensions where a subset of the ramified places has multiplicity $m/2$. We analyze the cases where one or two such places exist, providing explicit formulas for $n_i$.
	
	\subsubsection{Existence Conditions for \(\lambda_i\in\{1,m/2\}\)}
	
	Assume that \(m\) is even and let \(s\in\{1,\ldots,r-1\}\). Suppose
	\(
	\lambda_1=\cdots=\lambda_s=1
	\)
	and
	\(
	\lambda_{s+1}=\cdots=\lambda_r=\frac{m}{2}.
	\)
	
	For \(N\in\{0,\ldots,r-s\}\), let \(\mathcal P(N)\) denote the existence of an effective divisor
	\[
	A=\sum_{i=1}^s n_iP_i+\sum_{i=1}^{r-s}n_{s+i}\frac{\operatorname{div}_0(x-\alpha_{s+i})}{2}
	\]
	satisfying \(\deg(A)=g\), \(\ell(A)=1\), and
	\(
	n_{s+1}=\cdots=n_{s+N}=1
	\)
	together with
	\(
	n_{s+N+1}=\cdots=n_r=0.
	\) 
	Define
	\[
	\mathcal B(N,j)
	:=
	B(0,j)
	-
	\left|
	\left\{
	i\in\{1,\ldots,r-s\}:
	[i\le N]\frac m2
	\ge
	\left(j\frac m2\right)\bmod m>0
	\right\}
	\right|
	=
	B(0,j)-(j\bmod 2)N.
	\]
	
	Then \(\mathcal P(N)\) holds if and only if
	\(
	0\le \mathcal B(N,m-1)\le\cdots\le \mathcal B(N,1)\le s.
	\)
	
	Moreover,
	\[
	\mathcal B(N,j)
	=
	-1+
	\left\lceil
	\frac{
		s(m-j)+(r-s)\frac m2(j\bmod 2)
	}{m}
	\right\rceil
	-(j\bmod 2)N,
	\]
	and for \(j\in\{1,\ldots,m-2\}\), we have
	\[
	\mathcal B(N,j)-\mathcal B(N,j+1)
	=
	\left\lfloor
	\frac{
		s(j+1)-(r-s)\frac m2((j+1)\bmod 2)
	}{m}
	\right\rfloor
	-
	\left\lfloor
	\frac{
		sj-(r-s)\frac m2(j\bmod 2)
	}{m}
	\right\rfloor
	+(-1)^jN.
	\]
	\begin{prop}\label{prop:existence 1 of m/2} 
		Assume that \(r-s=1\). Then:
		\begin{enumerate}
			\item The property \(\mathcal{P}(0)\) holds whenever
			\(
			r\ge \frac{m}{2}-\left(\frac{m}{2}\bmod 2\right).
			\)
			
			\item The property \(\mathcal{P}(1)\) holds whenever
			\(
			r\ge \frac{m}{2}+2.
			\)
		\end{enumerate}
	\end{prop}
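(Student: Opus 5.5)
We will prove both parts by checking the chain criterion stated just before the proposition. That criterion says $\mathcal P(N)$ holds if and only if
\[
0\le \mathcal B(N,m-1)\le\cdots\le \mathcal B(N,1)\le s .
\]
Throughout, put $h=m/2$ and $s=r-1$. The first step is to simplify the closed formula for $\mathcal B(N,j)$, splitting on the parity of $j$. Using $\lceil (sm-u)/m\rceil = s-\lfloor u/m\rfloor$, we get
\[
\mathcal B(N,j)=s-1-\left\lfloor \tfrac{sj}{m}\right\rfloor \ (j \text{ even}),\qquad
\mathcal B(N,j)=s-1-\left\lfloor \tfrac{sj-h}{m}\right\rfloor-N \ (j \text{ odd}).
\]
With these formulas the three kinds of conditions become elementary floor inequalities, which we treat in turn.

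\emph{Endpoints.} For $j=1$ we have $\lfloor (s-h)/m\rfloor\ge -1$, so $\mathcal B(N,1)\le s-N\le s$ always. For $j=m-1$ (odd), $\lfloor (sm-s-h)/m\rfloor = s-\lceil (s+h)/m\rceil$, hence
\[
\mathcal B(N,m-1)=\lceil (s+h)/m\rceil-1-N .
\]
This is automatically $\ge 0$ when $N=0$. When $N=1$ it is $\ge 0$ exactly when $s+h>m$, i.e.\ $s\ge h+1$, i.e.\ $r\ge \frac m2+2$. This is precisely the hypothesis of part (2), and shows it cannot be weakened.

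\emph{Monotonicity.} We use the difference formula already given, with the case split on the parity of $j$:
\begin{itemize}
\item For $j$ odd we need $\lfloor s(j+1)/m\rfloor\ge \lfloor (sj-h)/m\rfloor+N$. The two arguments differ by $s+h$. So the inequality is trivial for $N=0$, and for $N=1$ it follows from $s+h\ge m$, which part (2) guarantees.
\item For $j$ even we need $\lfloor (sj+s-h)/m\rfloor\ge \lfloor sj/m\rfloor-N$. If $s\ge h$, or if $N=1$, this is immediate, so part (2) is complete.
\end{itemize}

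\emph{Part (1) with $s<h$.} The remaining case is $N=0$ and $s<h$. Here the inequality says that $(sj)\bmod m\ge h-s$ for every even $j\in\{2,\dots,m-2\}$. The hypothesis $r\ge h-(h\bmod 2)$ leaves only three values of $s$ to handle, and in each we use that $(sj)\bmod m$ is even because $m$ and $j$ are even.
\begin{itemize}
\item $s=h-1$ with $h$ even: then $\gcd(h-1,2h)=1$, so $m\nmid sj$ and $(sj)\bmod m\ge 1=h-s$.
\item $s=h-1$ with $h$ odd: $m\mid (h-1)j$ would force $h\mid j$, which is impossible for an even $j<2h$ since $h$ is odd.
\item $s=h-2$ with $h$ odd: here $\gcd(h-2,2h)=1$, so $(sj)\bmod m$ is a nonzero even number, hence $\ge 2=h-s$.
\end{itemize}

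The main obstacle is this last case analysis: the arithmetic on $\gcd(s,m)$ and on the parity of $(sj)\bmod m$ is exactly where the sharp threshold $\frac m2-(\frac m2\bmod 2)$ comes from. The rest is bookkeeping with floors.
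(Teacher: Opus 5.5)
Your proof is correct and follows the same route as the paper. Both arguments check the two endpoints and the consecutive differences of $\mathcal B(N,j)$, split by the parity of $j$, and both reduce the only nontrivial case ($N=0$, $j$ even, $s<h$) to $(sj)\bmod m\ge h-s$, which is settled by a gcd/parity argument. Your separate treatment of $s=h-1$ with $h$ odd is actually more careful than the paper's: the paper asserts $\gcd(r-1,m)=1$ there, which is false since $h-1$ is even, whereas what is needed, and true, is $\gcd(r-1,m/2)=1$.
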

	\begin{proof}
		We verify the boundary conditions
		\(0\le \mathcal B(N,m-1)\) and \(\mathcal B(N,1)\le s\), together with the monotonicity condition
		\(
		\mathcal B(N,j)\ge \mathcal B(N,j+1).
		\)
		
		\begin{enumerate}
			\item Assume \(N=0\) and
			\(
			r\ge \frac m2-\left(\frac m2\bmod2\right).
			\)
			Then
			\[
			\mathcal B(0,m-1)
			=
			-1+
			\left\lceil\frac{(r-1)+\frac m2}{m}\right\rceil
			\ge0,
			\]
			and
			\[
			\mathcal B(0,1)
			=
			-1+
			\left\lceil\frac{(r-1)(m-1)+\frac m2}{m}\right\rceil
			\le r-1=s.
			\]
			
			Moreover, if \(j\) is odd, then
			\[
			\mathcal B(0,j)-\mathcal B(0,j+1)
			=
			\left\lfloor\frac{(r-1)(j+1)}{m}\right\rfloor
			-
			\left\lfloor\frac{(r-1)j-\frac m2}{m}\right\rfloor
			\ge0.
			\]
			
			If \(j\) is even, then
			\[
			\mathcal B(0,j)-\mathcal B(0,j+1)
			=
			\left\lfloor
			\frac{(r-1)j+\left(r-1-\frac m2\right)}{m}
			\right\rfloor
			-
			\left\lfloor\frac{(r-1)j}{m}\right\rfloor.
			\]
			
			This is nonnegative if \(r-1-\frac m2\ge0\). Otherwise,
			\(
			r-1\in\left\{\frac m2-1,\frac m2-1-\left(\frac m2\bmod2\right)\right\},
			\)
			so \(\gcd(r-1,m)=1\), whence
			\[
			((r-1)j)\bmod m
			=
			2\left((r-1)\frac j2\right)\bmod \frac m2
			\ge2,
			\]
			which again yields nonnegativity.
			
			\item Assume \(N=1\) and \(r\ge \frac m2+2\). Then
			\[
			\mathcal B(1,m-1)
			=
			-1+
			\left\lceil\frac{(r-1)+\frac m2}{m}\right\rceil
			-1
			\ge0,
			\]
			and
			\[
			\mathcal B(1,1)
			=
			-1+
			\left\lceil\frac{(r-1)(m-1)+\frac m2}{m}\right\rceil
			-1
			\le r-1=s.
			\]
			
			Further,
			\[
			\mathcal B(1,j)-\mathcal B(1,j+1)
			=
			\left\lfloor
			\frac{
				(r-1)(j+1)-\frac m2((j+1)\bmod2)
			}{m}
			\right\rfloor
			-
			\left\lfloor
			\frac{
				(r-1)j-\frac m2(j\bmod2)
			}{m}
			\right\rfloor
			+(-1)^j.
			\]
			This is nonnegative since
			\(
			(r-1)+\frac m2\ge m+1
			\)
			for odd \(j\), and
			\(
			(r-1)-\frac m2\ge1
			\)
			for even \(j\).
		\end{enumerate}
	\end{proof}
	
	Using the same argument as in the previous proposition, one obtains the following existence criteria in the case \(r-s=2\).
	
	\begin{prop}\label{prop:existence 2 of m/2} 
		Assume that \(r-s=2\). Then:
		\begin{enumerate}
			\item \(\mathcal P(0)\) holds whenever
			\(
			s=r-2\ge m-1-\left(\frac m2\bmod2\right).
			\)
			
			\item \(\mathcal P(1)\) always holds, equivalently,
			\(
			s=r-2\ge1.
			\)
			
			\item \(\mathcal P(2)\) holds whenever
			\(
			s=r-2\ge m+1.
			\)
		\end{enumerate}
	\end{prop}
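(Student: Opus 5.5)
We will follow the same pattern as the proof of Proposition~\ref{prop:existence 1 of m/2}. By the criterion stated above, $\mathcal P(N)$ is equivalent to the chain $0\le \mathcal B(N,m-1)\le\cdots\le\mathcal B(N,1)\le s$. The plan is to first get a closed form of $\mathcal B(N,j)$ for $r-s=2$, and then check the two boundary inequalities and the step inequalities $\mathcal B(N,j)\ge\mathcal B(N,j+1)$, separating $j$ odd from $j$ even.

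\emph{Closed form.} Since $r-s=2$, the term $(r-s)\frac m2 (j\bmod 2)$ equals $m(j\bmod 2)$. It therefore leaves the ceiling as an integer shift, and we get
\[
\mathcal B(N,j)=\begin{cases} c_j-N, & j \text{ odd},\\ c_j-1, & j\text{ even},\end{cases}
\qquad c_j:=\left\lceil \frac{s(m-j)}{m}\right\rceil = s-\left\lfloor \frac{sj}{m}\right\rfloor .
\]

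\emph{Boundary conditions.} Since $m-1$ is odd, $\mathcal B(N,m-1)=\lceil s/m\rceil-N$. This is $\ge 0$ for $N\le1$ whenever $s\ge1$, and for $N=2$ whenever $s\ge m+1$ (then $\lceil s/m\rceil\ge2$). The other end is $\mathcal B(N,1)=s-\lfloor s/m\rfloor-N\le s$, which holds trivially.

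\emph{Monotonicity.} Put $\delta_j:=\lfloor s(j+1)/m\rfloor-\lfloor sj/m\rfloor\ge0$.
\begin{itemize}
\item For $j$ odd, the inequality $\mathcal B(N,j)\ge \mathcal B(N,j+1)$ becomes $\delta_j\ge N-1$. This is automatic for $N\in\{0,1\}$. For $N=2$ it holds because $s\ge m+1$ forces $\delta_j\ge1$.
\item For $j$ even, the inequality becomes $\delta_j\ge 1-N$. This is automatic for $N\ge1$, which already gives items (2) and (3).
\end{itemize}

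\emph{Main obstacle: item (1), $N=0$, $j$ even.} Here we need $\delta_j\ge1$ for every even $j\le m-2$, under the hypothesis $s\ge m-1-(\frac m2\bmod 2)$.
\begin{itemize}
\item If $s\ge m$, this is immediate.
\item If $s=m-1$, then $\lfloor (m-1)j/m\rfloor=j-1$ for $1\le j\le m-1$, so $\delta_j=1$.
\item The delicate case is $s=m-2$ with $\frac m2$ odd. We write $\lfloor (m-2)j/m\rfloor=j-\lceil 2j/m\rceil$, so $\delta_j=1-\bigl(\lceil 2(j+1)/m\rceil-\lceil 2j/m\rceil\bigr)$. For $1\le j\le m-2$ the bracket is nonzero only when $j=\frac m2$. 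Since $\frac m2$ is odd, this $j$ is never even, so $\delta_j=1$ for all even $j$.
\end{itemize}
This parity argument, which mirrors the use of $\gcd(r-1,m)=1$ in Proposition~\ref{prop:existence 1 of m/2}, is where the hypothesis $\frac m2\bmod 2$ enters. It is the step we expect to need the most care.
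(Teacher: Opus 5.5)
Your proposal is correct and follows the same route the paper intends; for this proposition the paper writes only ``using the same argument as in the previous proposition.'' That route is to reduce $\mathcal P(N)$ to the chain $0\le\mathcal B(N,m-1)\le\cdots\le\mathcal B(N,1)\le s$, use $(r-s)\frac m2(j\bmod 2)=m(j\bmod 2)$ to get the closed form, and check the boundary and step inequalities separately for odd and even $j$. Your handling of the delicate case $s=m-2$ with $\frac m2$ odd, via $\lceil 2(j+1)/m\rceil-\lceil 2j/m\rceil$, replaces the paper's $\gcd$/residue argument from Proposition~\ref{prop:existence 1 of m/2} and is more explicit than anything the paper writes for this case.
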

	
	\subsubsection{Explicit Coefficients for \(\lambda_i\in\{1,m/2\}\)}
	
	We now determine explicitly the coefficients of the divisors obtained in Proposition \ref{prop:existence 1 of m/2} and \ref{prop:existence 2 of m/2}.
	
	Assume
	$\lambda_{1}=\cdots=\lambda_{s}=1$ and $\lambda_{s+1}=\cdots=\lambda_{r}=\frac{m}{2}$.
	We consider divisors of the form
	\[
	A=
	\sum_{i=1}^{s}
	n_i P_i
	+
	\sum_{i=1}^{r-s}
	n_{s+i} \frac{\divv_0(x-\alpha_{s+i})}{2},
	\]
	where $P_i$, $1\le i\le s$, is the unique zero of $x-\alpha_i$,
	such that $A$ is effective non-special of degree $g$.
	Remember that, necessarily, $n_{s+1},\ldots,n_r\in\{0,1\}$,
	and let us denote $N=n_{s+1}+\cdots+n_r$.
	
	Assuming $n_1\le\cdots\le n_s$, the coefficients satisfy
	\begin{align*}
		n_i
		& = \max\left( \{0\} \cup \{j\in\{1,\ldots,m-1\}:
		j \le n_i \le \cdots \le n_s
		\}\right)
		\\
		& =
		\max\left(
		\{0\}\cup
		\{
		j\in\{1,\ldots,m-1\}:
		\mathcal{B}(N,j) \ge s+1-i
		\}
		\right),
		\quad
		1\le i\le s.
	\end{align*}
	Moreover,
	\[
	\mathcal{B}(N,j)=
	-1+r
	-\left\lfloor\frac{sj + \epsilon_j \frac{m}{2}(r-s)}{m}\right\rfloor
	-
	\begin{cases}
		(r-s), & \epsilon_j=0
		\\
		N, & \epsilon_j=1
	\end{cases}
	\]
	where $\epsilon_j = j \bmod 2$.
	
	Write $j=2k+\epsilon_j$. We have that
	$\mathcal{B}(N,j) \ge s+1-i$ if and only if
	\[
	k \le -1
	+ \left\lceil\frac{
		m(i-1+\epsilon_j (r-s-N)) - \epsilon_j(r-s)\frac{m}{2} - \epsilon_j s
	}{2s}\right\rceil.
	\]
	Thus, considering the cases $\epsilon_j=0$ and $\epsilon_j=1$ separately, we have
	\[
	n_i=
	\max\left\{
	0
	,
	2\left\lceil \frac{m(i-1)}{2s} \right\rceil - 2
	,
	2\left\lceil \frac{m(i-1+r-s-N) - (r-s)\frac{m}{2} - s}{2s} \right\rceil - 1
	\right\}.
	\]
	(Note that the right side is always less than $m$.)
	
	The following theorems will consider the cases $r-s=1$ and $r-s=2$
	and will give simplified formulas for certain values of $r$.
	
	\begin{theorem}
		\label{thm:explicit coeff 1 of m/2}
		
		Let $m\ge 2$ be even
		and consider the Kummer extension $\mathcal{F}/\mathcal{K}(x)$
		given by
		\[
		y^m = \prod_{i=1}^{r} (x-\alpha_i)^{\lambda_i},
		\]
		where $\gcd\left(\operatorname{char}(\mathcal{K}),m)\right)=1$,
		the elements $\alpha_1,\ldots,\alpha_r\in\mathcal{K}$ are distinct,
		\[
		\lambda_1=\cdots=\lambda_{r-1}=1,
		\qquad \lambda_r=\frac{m}{2}.
		\]
		
		Let $N\in\{0,1\}$. Then, whenever
		\[
		r \ge
		\begin{cases}
			\frac{m}{2} - \left( \frac{m}{2} \bmod 2 \right)
			& N = 0
			\\
			\frac{m}{2} + 2
			& N = 1,
		\end{cases}
		\]
		the divisor of $\mathcal{F}$
		\[
		A =
		\sum_{i=1}^{r-1} n_i P_i
		+
		[N=1] \frac{\divv_0(x-\alpha_r)}{2}
		\]
		is non-special of degree $g=g(\mathcal{F})$,
		where the coefficients $n_1,\ldots,n_{r-1}$ are given by
		\[
		n_i = 
		\max\left\{
		0
		,
		2\left\lceil \frac{m(i-1)}{2(r-1)} \right\rceil - 2
		,
		2\left\lceil \frac{m(i-N-\frac12) - (r-1)}{2(r-1)} \right\rceil - 1
		\right\}.
		\]
		
		In particular, we have the following special cases.
		\begin{enumerate}
			\item
			If $N=0$ and
			$\frac{m}{2} - \left( \frac{m}{2} \bmod 2 \right) \le r \le \frac{m}{2}$,
			the divisor $A$ is $\sum_{i=1}^{r-1} n_i P_i$, with
			\[
			n_i =
			2\left\lceil \frac{m(i-\frac12) - (r-1)}{2(r-1)} \right\rceil - 1,
			\qquad
			1 \le i \le r-1.
			\]
			
			\item
			If $N=1$ and
			$r=\frac{m}{2} + 2$,
			then we have
			\[
			n_{2+i} = 2i, \qquad 1\le i\le \frac{m}{2}-1;
			\qquad n_1=n_2=0.
			\]
		\end{enumerate}
	\end{theorem}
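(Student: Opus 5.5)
The proof combines three earlier ingredients, all specialised to $s=r-1$ and $r-s=1$.

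\textbf{Step 1 (existence).} Under the stated lower bounds on $r$, Proposition~\ref{prop:existence 1 of m/2} gives $\mathcal P(N)$. Equivalently, the chain
\[
0\le \mathcal B(N,m-1)\le\cdots\le\mathcal B(N,1)\le s
\]
holds. By Theorem~\ref{thm:class_n_s_inv_eff_div}, applied with $n_0=0$ and $n_r=N$, an effective invariant divisor of degree $g$ with $\ell(A)=1$ exists exactly when the multiplicities $\#\{i\le s: n_i\ge j\}$ equal $\mathcal B(N,j)$ for every $j$.

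This uses that for $\lambda_i=1$ we have $(j\lambda_i)\bmod m=j$. The contribution of $i=r$ to $C(0,j)$ is $(j\bmod 2)N$, since $n_r\cdot\frac m2\ge \frac m2$ exactly when $n_r=1$ and $j$ is odd. Subtracting this contribution is what produces $\mathcal B(N,j)$.

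Hence the sorted sequence $n_1\le\cdots\le n_s$ is uniquely determined by
\[
n_i=\max\bigl(\{0\}\cup\{j:\mathcal B(N,j)\ge s+1-i\}\bigr),
\]
as in the general discussion preceding the theorem. Since $\ell(A)=1$ and $\deg A=g$, the divisor $A$ is non-special by Riemann--Roch.

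\textbf{Step 2 (closed form).} I then substitute $s=r-1$ and $r-s=1$ into the general closed formula derived just before the theorem:
\[
n_i=\max\Bigl\{0,\;2\Bigl\lceil\tfrac{m(i-1)}{2s}\Bigr\rceil-2,\;2\Bigl\lceil\tfrac{m(i-1+r-s-N)-(r-s)\frac m2-s}{2s}\Bigr\rceil-1\Bigr\}.
\]
The numerator of the third term becomes $m(i-N)-\frac m2-(r-1)=m(i-N-\tfrac12)-(r-1)$, which gives the displayed formula.

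To make this step self-contained, I would re-derive the equivalence
\[
\mathcal B(N,2k+\epsilon)\ge s+1-i \iff k\le -1+\lceil\cdots\rceil.
\]
The derivation writes $\mathcal B(N,j)=r-1-\lfloor (sj+\epsilon\frac m2)/m\rfloor-(\text{$1$ or $N$})$ and solves the resulting floor inequality for $k$.

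\textbf{Step 3 (special cases).} For case (1), take $N=0$ and $r\le \frac m2$, so $s<\frac m2$. I would compare the even candidate $2\lceil \frac{m(i-1)}{2s}\rceil-2$ with the odd candidate $2\lceil\frac{m(i-1/2)-s}{2s}\rceil-1$. The odd candidate's ceiling argument exceeds the even one's by $\frac{m/2-s}{2s}\ge 0$. Therefore the odd candidate is always at least the even candidate minus $1$, and in fact at least the even candidate. This is the delicate part, because the ceilings can coincide; in that case the odd term $2c-1$ beats the even term $2c-2$ anyway. For $i\ge1$ the odd candidate is $\ge 1>0$, since $m/2-s>0$.

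For case (2), take $r=\frac m2+2$, so $s=\frac m2+1$ and $2s=m+2$. I would compute both candidates directly. The even term is $2\lceil \frac{m(i-1)}{m+2}\rceil-2$, and it equals $2(i-2)$ for $i\ge 2$, since $\frac{m(i-1)}{m+2}\in(i-2,i-1]$ when $2(i-1)<m+2$. The odd term is $2\lceil\frac{m(i-\frac32)-s}{m+2}\rceil-1$, and I expect it to be at most $2i-5$.

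Thus $n_i=2(i-2)$ for $i\ge 2$, which gives $n_2=0$ and $n_{2+i}=2i$. For $i=1$ all candidates are $\le0$, so $n_1=0$. The main obstacle is exactly these ceiling comparisons, including the edge index $i=s$, where one must check $2(s-2)=m-2<m$.
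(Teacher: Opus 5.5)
Your proposal is correct and follows essentially the same route as the paper. Existence comes from the proposition for $r-s=1$ combined with the classification theorem, the general formula comes from substituting $s=r-1$ into the closed form derived before the theorem, and case (1) comes from the observation that the odd candidate's ceiling argument exceeds the even one's by $\frac{m/2-s}{2s}>0$; your explicit check of case (2) — for $i\ge 2$ the even candidate equals $2(i-2)$ and the odd candidate equals $2i-5$ — fills in what the paper only calls ``analogous arguments,'' and it is correct.
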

	\begin{proof}
		he general expression for the coefficients $n_i$ follows from the computations preceding this theorem together with Proposition \ref{prop:existence 1 of m/2}. It remains to show the special cases.
		
		Suppose $N=0$ and $r \le \frac{m}{2}$. Define
		\[
		I_0(i)=\frac{m(i-1)}{2(r-1)}
		\qquad
		\mathrm{and}
		\qquad
		I_1(i) = \frac{m(i-\frac12) - (r-1)}{2(r-1)}.
		\]
		Then
		\[
		I_1(i) - I_0(i) = \frac{m/2 - (r-1)}{2(r-1)} = I_1(1) > 0.
		\]
		Consequently,
		$2\left\lceil I_1(i) \right\rceil - 1
		>
		2\left\lceil I_0(i) \right\rceil - 2$, and therefore
		\[
		n_i = 2\left\lceil I_1(i) \right\rceil - 1.
		\]
		
		The case $N=1$ and $r=\frac{m}{2}+2$ follows by analogous arguments.
	\end{proof}
	
	The following theorem has a similar proof.
	
	\begin{theorem}
		\label{thm:explicit coeff 2 of m/2}
		
		Let $m\ge 2$ be even
		and consider the Kummer extension $\mathcal{F}/\mathcal{K}(x)$
		given by
		\[
		y^m = \prod_{i=1}^{r} (x-\alpha_i)^{\lambda_i},
		\]
		where $\gcd\left(\operatorname{char}(\mathcal{K}),m)\right)=1$,
		the elements $\alpha_1,\ldots,\alpha_r\in\mathcal{K}$ are distinct,
		\[
		\lambda_1=\cdots=\lambda_{r-2}=1,
		\qquad
		\lambda_{r-1}=\lambda_r=\frac{m}{2}.
		\]
		
		Let $N\in\{0,1,2\}$. Then, whenever
		\[
		r \ge
		\begin{cases}
			m+1 - \left( \frac{m}{2} \bmod 2 \right), & N=0
			\\
			3, & N=1
			\\
			m+3, & N=2,
		\end{cases}
		\]
		the divisor of $\mathcal{F}$
		\[
		A = \sum_{i=1}^{r-2} n_i P_i
		+
		[N \ge 1] \frac{\divv_0(x-\alpha_{r-1})}{2}
		+
		[N \ge 2] \frac{\divv_0(x-\alpha_r)}{2}
		\]
		is non-special of degree $g = g(\mathcal{F})$,
		where the coefficients $n_1,\ldots,n_{r-2}$ are given by
		\[
		n_i =
		\max\left\{
		0
		,
		2 \left\lceil \frac{m(i-1)}{2(r-2)} \right\rceil - 2
		,
		2 \left\lceil \frac{m(i-N) - (r-2)}{2(r-2)} \right\rceil - 1
		\right\}.
		\]
		
		In particular, we have the following cases:
		
		\begin{enumerate}
			\item If $N=0$, we have $A = \sum_{i=1}^{r-2} n_i P_i$.
			\item
			If $N=1$ and $r=\frac{m}{2}+2$, we have
			\[
			n_{i+1} = 2i-1,\qquad 1\le i\le \frac{m}{2}-1;
			\qquad n_0=0.
			\]
			\item
			If $N=2$ and $r=m+3$, we have
			\[
			n_{2i+2} = n_{2i+3} = 2i,\qquad 1\le i\le \frac{m}{2}-1;
			\qquad n_1=n_2=n_3=0.
			\]
		\end{enumerate}
		
	\end{theorem}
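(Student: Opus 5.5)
The argument mirrors the proof of Theorem~\ref{thm:explicit coeff 1 of m/2}, now with $s=r-2$. First, the general formula for the $n_i$ is obtained by substituting $s=r-2$ and $r-s=2$ into the max-formula derived before Theorem~\ref{thm:explicit coeff 1 of m/2}. In the second candidate we replace $m(i-1+r-s-N)-(r-s)\tfrac m2-s$ by $m(i-1+2-N)-m-(r-2)=m(i-N)-(r-2)$. This gives exactly the three-term maximum in the statement.

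The existence hypotheses are exactly those of Proposition~\ref{prop:existence 2 of m/2}:
\begin{itemize}
\item for $N=0$, the condition $s\ge m-1-(\tfrac m2\bmod 2)$, that is, $r\ge m+1-(\tfrac m2\bmod2)$;
\item for $N=1$, the condition $s\ge1$;
\item for $N=2$, the condition $s\ge m+1$.
\end{itemize}
That proposition gives the monotone chain $0\le\mathcal B(N,m-1)\le\cdots\le\mathcal B(N,1)\le s$. The preceding discussion then shows that the tuple with $n_i=\max\{j:\mathcal B(N,j)\ge s+1-i\}$ (or $0$) satisfies $|C(0,j)|=B(0,j)$ for all $j$. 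By Theorem~\ref{thm:class_n_s_inv_eff_div}(3)$\Rightarrow$(1), $A$ is therefore non-special of degree $g$. I would also check that the chosen places $\alpha_{r-1},\alpha_r$ with $n=1$ are consistent with the reduction $\mathcal B(N,j)=B(0,j)-(j\bmod2)N$. This holds because $\lambda=m/2$ and $n\cdot\gcd(m,\lambda)=m/2\ge (jm/2)\bmod m>0$ exactly for odd $j$.

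The special cases are direct simplifications of the max-formula.

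Case (1), $N=0$, is literally the general formula restricted to the $P_i$, since no half-places occur.

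Case (2), $N=1$ and $r=\tfrac m2+2$, has $s=\tfrac m2$, so $\tfrac{m}{2s}=1$. The even candidate is $2(i-1)-2=2i-4$. The odd candidate is $2\lceil (2(i-1)-1)/2\rceil-1=2(i-1)-1=2i-3$, so the odd one dominates for $i\ge2$ and both are $\le0$ for $i=1$. Hence $n_1=0$ and $n_{i+1}=2i-1$.

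Case (3), $N=2$ and $r=m+3$, has $s=m+1$. Here I would compare $\lceil m(i-1)/(2(m+1))\rceil$ with $\lceil (m(i-2)-(m+1))/(2(m+1))\rceil$ by writing $i=2k+2$ or $2k+3$. The fraction $\frac{m(i-1)}{2(m+1)}=\frac{(i-1)}{2}-\frac{i-1}{2(m+1)}$ has ceiling $k+1$ for $i\in\{2k+2,2k+3\}$ with $i-1\le m+1$. So the even candidate equals $2k$. The odd candidate is $\frac{i-3}{2}-\frac{i-2}{2(m+1)}-\frac12\cdot\frac{m+1}{m+1}$, roughly $\tfrac{i}{2}-2$, so it gives at most $2k-1$. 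Hence $n_{2k+2}=n_{2k+3}=2k$, and $n_1=n_2=n_3=0$ after checking small $i$.

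The main obstacle is the bookkeeping in these ceiling comparisons near boundary indices. The comparison must hold uniformly for $i\le s$, where the correction term $\frac{i-1}{2(m+1)}$ stays below $\tfrac12$. I would verify those endpoint cases explicitly.
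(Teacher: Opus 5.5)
Your proposal is correct and is essentially the paper's argument. The paper proves this theorem the same way as Theorem~\ref{thm:explicit coeff 1 of m/2}: it takes the general formula from the computation preceding that theorem (here with $s=r-2$, $r-s=2$), obtains the monotone chain from Proposition~\ref{prop:existence 2 of m/2}, and gets the special cases by comparing the two ceiling candidates.

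There is one small arithmetic slip in case (3). The odd candidate's argument is $\frac{m(i-2)-(m+1)}{2(m+1)}=\frac{i-3}{2}-\frac{i-2}{2(m+1)}$; you subtracted $\frac12$ twice. Its ceiling is exactly $k$ for $i\in\{2k+2,2k+3\}$, so that candidate equals $2k-1$, and your conclusion $n_{2k+2}=n_{2k+3}=2k$ is unaffected.
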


	\subsection{Existence Conditions and Explicit Coefficients for $\lambda_i \in \{1, 2\}$}
	\subsubsection{Existence Conditions for \(\lambda_i\in\{1,2\}\)}
	
	Assume throughout that \(m\) and \(r-1\) are even,
	\(
	\lambda_1=\cdots=\lambda_{r-1}=1,
	\)
	and
	\(
	\lambda_r=2.
	\)
	Set
	\(
	\Lambda=\sum_{i=1}^r\lambda_i=r+1,
	\)
	so that \(\Lambda\) is even.
	
	We consider effective invariant divisors of degree \(g\) of the form
	\begin{equation}
		\label{eq:A n_i, 1 of 2}
		A=
		n_0\frac{\operatorname{div}_{\infty}(x)}
		{m/\gcd(m,\Lambda)}
		+
		\sum_{i=1}^{r-1}n_iP_i
		+
		n_r\frac{\operatorname{div}_{0}(x-\alpha_r)}{m/2},
	\end{equation}
	where \(P_i\) is the unique zero of \(x-\alpha_i\). Let \(\mathcal Q(n_0,n_r)\) denote the condition that there is a divisor $A$ as in (\ref{eq:A n_i, 1 of 2}), for some choice of coefficients $n_1,\ldots,n_{r-1}$, such that \(\deg(A)=g\) and \(\ell(A)=1\).
	
	Define
	\[
	R(n_0,n_r,j)
	=
	B(n_0,j)
	-
	\left[
	2n_r\ge (2j)\bmod m>0
	\right].
	\]
	Then \(\mathcal Q(n_0,n_r)\) is equivalent to
	\(
	n_0<\frac{m}{\gcd(m,\Lambda)},
	\)
	\(
	n_r<\frac m2,
	\)
	and the monotonicity of the sequence
	\[
	0 \le R(n_0,n_r,m-1) \le \cdots \le R(n_0,n_r,1) \le r-1.
	\]

	For \(k\in\mathbb N\), set
	\begin{equation}\label{eq:Nk_def}
		N_k=N_k(n_0)
		=
		\left\lfloor
		\frac{
			km-1-n_0\gcd(m,\Lambda)
		}{
			\Lambda
		}
		\right\rfloor.
	\end{equation}
	
	Using the same monotonicity argument as in the previous subsections, one obtains the following existence criterion.
	
	\begin{prop}\label{prop:existence_Q}
		Assume
		\(
		n_0\gcd(m,\Lambda)<\Lambda \le m.
		\)
		If
		\(
		k\in\left\{1,\ldots,\frac{\Lambda}{2}-1\right\}
		\)
		and
		\(
		n_r=N_k>0,
		\)
		then \(\mathcal Q(n_0,n_r)\) holds.
	\end{prop}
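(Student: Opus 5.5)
The plan is to verify directly the criterion stated just before the proposition. That is, we check that $n_0<\frac{m}{\gcd(m,\Lambda)}$ and $n_r<\frac m2$, and that the sequence $R(n_0,n_r,j)$ satisfies
\[
0\le R(n_0,n_r,m-1)\le\cdots\le R(n_0,n_r,1)\le r-1 .
\]
Write $d=\gcd(m,\Lambda)$. The first step is to make $R$ explicit. Since $\lambda_1=\cdots=\lambda_{r-1}=1$, we have $(-j\lambda_i)\bmod m=m-j$ for these indices. Moreover $(-2j)\bmod m$ equals $m-2j$ for $2j<m$, equals $0$ for $2j=m$, and equals $2m-2j$ for $2j>m$. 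Substituting into \eqref{eq:bound_B} gives
\[
B(n_0,j)=-1+F(j)-[\,j\le m/2\,],\qquad F(j):=\left\lceil\frac{\Lambda(m-j)-n_0d}{m}\right\rceil .
\]
The indicator $[2n_r\ge(2j)\bmod m>0]$ equals $[\,j\le n_r\,]+[\,m/2<j\le m/2+n_r\,]$. Hence $R(n_0,n_r,j)=-1+F(j)-c(j)$, where
\[
c(j)=[j\le m/2]+[j\le n_r]+[m/2<j\le m/2+n_r].
\]

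The second step handles the endpoints and the bound $n_r<m/2$. Because $0\le n_0d<\Lambda\le m$, we get $F(m-1)=1$. We also have $F(1)\le\Lambda$, since $(\Lambda+n_0d)/m>0$. Consequently $R(n_0,n_r,1)\le\Lambda-3=r-2\le r-1$, and $R(n_0,n_r,m-1)=-[\,n_r\ge m/2-1\,]$. It therefore suffices to show $N_k\le m/2-2$, which also gives $n_r<m/2$. This follows from $k\le\Lambda/2-1$ and $\Lambda\le m$:
\[
N_k<\frac{(\Lambda/2-1)m}{\Lambda}=\frac m2-\frac m\Lambda\le\frac m2-1 .
\]
The hypothesis $n_0d<\Lambda\le m$ also gives $n_0<m/d$.

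The third step, which is the heart of the matter, is monotonicity. We need $F(j)-F(j+1)\ge c(j)-c(j+1)$ for every $j$. Since $\Lambda\le m$, $F$ is nonincreasing with steps of size $0$ or $1$. Hence the only danger is where $c$ drops. Tracking the indicators, this happens at $j=n_r$ and at $j=m/2+n_r$, each a drop of $1$. At $j=m/2$ the two terms $[j\le m/2]$ and $[m/2<j\le m/2+n_r]$ exchange, and $c$ is unchanged because $n_r\ge1$. It thus remains to show that $F$ drops at $j=N_k$ and at $j=m/2+N_k$. By definition, $F(j)-F(j+1)=1$ exactly when some integer $\ell$ satisfies
\[
\Lambda(m-j-1)-n_0d\le \ell m<\Lambda(m-j)-n_0d,
\]
that is, when $\Lambda j\le(\mathrm{const})<\Lambda(j+1)$ for a suitable multiple of $m$. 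Taking that multiple to be $km$, the choice $N_k=\lfloor (km-1-n_0d)/\Lambda\rfloor$ is precisely the $j$ satisfying $\Lambda j\le km-1-n_0d<\Lambda(j+1)$. This produces the drop at $j=N_k$, with $N_k>0$ assumed. Since $\Lambda m/2\cdot\frac1m=\Lambda/2$ is an integer, we have $F(j+m/2)=F(j)-\Lambda/2$. The drop therefore transfers to $j=m/2+N_k$, which is in range because $N_k\le m/2-2$.

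I expect the main obstacle to be this drop analysis: getting the off-by-one conventions for the ceilings right, including the $-1$ in $N_k$ and the case $2j=m$. I will check it first on a small case such as $m=6$, $r=5$.
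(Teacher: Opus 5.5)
Your proposal is correct. It follows the route the paper indicates: the paper only says the result follows from the same endpoint-plus-monotonicity check used in the $\lambda_r=m/2$ case, so you are supplying details the paper omits, and your computations check out. These are the formula $R(n_0,n_r,j)=-1+F(j)-c(j)$, the bound $N_k\le m/2-2$ (which gives both $n_r<m/2$ and $R(n_0,n_r,m-1)=0$), and the reduction of monotonicity to drops of $F$ at $j=N_k$ and $j=N_k+m/2=N_{k+\Lambda/2}$, which follow from $\Lambda N_k< km-n_0\gcd(m,\Lambda)\le\Lambda(N_k+1)$ with $\ell=\Lambda-k$.
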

	
	\subsubsection{Explicit Coefficients for \(\lambda_1=\cdots=\lambda_{r-1}=1\) and \(\lambda_r=2\)}
	
	We now determine explicitly the coefficients of the divisors obtained in Proposition~\ref{prop:existence_Q}. As in the previous subsections, the coefficients are recovered from the inequalities
	\(
	R(n_0,N_k,j)\ge r-i.
	\)
	
	\begin{theorem}\label{thm:explicit_coefficients_l=2}
		Let
		\(
		y^{m}
		=
		\prod_{i=1}^{r}(x-\alpha_i)^{\lambda_i}
		\)
		be a Kummer extension over \(\cK(x)\), where \(m\ge 2\) and \(r-1\ge 2\) are even,
		\(
		\operatorname{char}(\cK)\nmid m,
		\)
		the elements \(\alpha_i\in\cK\) are distinct,
		\(
		\lambda_1=\cdots=\lambda_{r-1}=1,
		\)
		and
		\(
		\lambda_r=2.
		\)
		Set \(\Lambda=r+1\).
		
		Let
		\[
		n_0\in
		\left\{
		0,\dots,
		\frac{\Lambda}{\gcd(m,\Lambda)}-1
		\right\},
		\qquad
		N_i
		=
		\left\lfloor
		\frac{
			mi-1-n_0\gcd(m,\Lambda)
		}{
			\Lambda
		}
		\right\rfloor.
		\]
		
		Assume \(n_0\gcd(m,\Lambda)<\Lambda \le m\), and let
		\(
		k\in\{1,\dots,\Lambda/2-1\}
		\)
		satisfy \(N_k>0\). Then
		\[
		A
		=
		n_0\frac{\operatorname{div}_{\infty}(x)}
		{m/\gcd(m,\Lambda)}
		+
		\sum_{i=1}^{r-1}n_iP_i
		+
		N_k\frac{\operatorname{div}_{0}(x-\alpha_r)}
		{m/2}
		\]
		is an effective divisor of degree \(g\) with \(\ell(A)=1\), where
		\[
		n_i
		=
		\begin{cases}
			\max(0,N_{i-1}),
			&
			1\le i\le k,
			\\
			N_i,
			&
			k<i<k+\Lambda/2,
			\\
			N_{i+1},
			&
			k+\Lambda/2\le i\le r-1.
		\end{cases}
		\]
	\end{theorem}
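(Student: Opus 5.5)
By Proposition~\ref{prop:existence_Q}, the hypotheses $n_0\gcd(m,\Lambda)<\Lambda\le m$, $k\in\{1,\dots,\Lambda/2-1\}$ and $N_k>0$ already give $\mathcal Q(n_0,N_k)$. Hence some effective invariant non-special divisor of degree $g$ with these $n_0$ and $n_r=N_k$ exists. The remaining task is to identify its coefficients $n_1,\dots,n_{r-1}$, which lie over totally ramified places with $\lambda_i=1$.

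We will use Theorem~\ref{thm:class_n_s_inv_eff_div}(3). For $\lambda_i=1$ the condition $n_i\ge (j\lambda_i)\bmod m>0$ reads $n_i\ge j$. Condition (3) then becomes
\[
\#\{i\le r-1: n_i\ge j\}=R(n_0,N_k,j)\quad\text{for each } j\in\{1,\dots,m-1\}.
\]
Assume $n_1\le\cdots\le n_{r-1}$ (the theorem only determines the multiset). Then $n_i=\max(\{0\}\cup\{j: R(n_0,N_k,j)\ge r-i\})$, which is well defined because $R(n_0,N_k,\cdot)$ is nonincreasing with values in $[0,r-1]$. So the proof reduces to computing this threshold explicitly.

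Next we compute $R$. Since $(-j)\bmod m=m-j$ for the $r-1$ indices with $\lambda_i=1$, and $(-2j)\bmod m$ contributes for the last index, we get
\[
B(n_0,j)=-1+\left\lceil\frac{(r-1)(m-j)+(-2j)\bmod m-n_0\gcd(m,\Lambda)}{m}\right\rceil .
\]
We will split according to whether $2j<m$ or $2j\ge m$, so that $(-2j)\bmod m$ equals $m-2j$ or $2m-2j$ respectively (the boundary $2j=m$ must be treated separately). In both regimes $B$ simplifies to $r-1-\lfloor (\Lambda j+n_0\gcd(m,\Lambda)-c)/m\rfloor$ for a small constant shift $c$. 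The indicator term in $R$ equals $1$ exactly when $2N_k\ge (2j)\bmod m>0$, that is, when $j\le N_k$ or $m/2<j\le m/2+N_k$.

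The inequality $R(n_0,N_k,j)\ge r-i$ then becomes $\lfloor(\Lambda j+n_0\gcd(m,\Lambda))/m\rfloor\le i-1$, plus or minus the indicator. Solving for the largest such $j$ yields quantities of the form $\lfloor (mt-1-n_0\gcd(m,\Lambda))/\Lambda\rfloor=N_t$, where $t$ is $i-1$, $i$ or $i+1$ according to how many indicator jumps lie below the threshold. The jump at $j\le N_k$ turns out to be active exactly for $i\le k$. The second jump, near $m/2$, corresponds to index $k+\Lambda/2$, using $N_{t+\Lambda/2}=N_t+m/2$ since $\Lambda$ is even. This produces the three displayed cases, with $\max(0,\cdot)$ covering the empty set.

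The main obstacle is the bookkeeping at the two jump locations. We must verify that the threshold index crosses $N_k$ and $m/2+N_k$ exactly at $i=k$ and $i=k+\Lambda/2$. This uses $\Lambda\le m$, which makes consecutive $N_t$ strictly increasing, and $n_0\gcd(m,\Lambda)<\Lambda$, which makes $N_t$ independent of rounding ambiguities. As a consistency check, we will confirm that $\sum n_i+N_k\cdot 2+n_0\gcd(m,\Lambda)=g$ via Lemma~\ref{lem:sum_B}.
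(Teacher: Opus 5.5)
Your proposal is correct and is essentially the paper's own argument: existence comes from Proposition~\ref{prop:existence_Q}, and the sorted coefficients are read off from $R(n_0,N_k,j)\ge r-i$, which rewrites as $j\le N_{\,i-[j\le N_k]+[j>N_{k+\Lambda/2}]}$, with strict monotonicity of $(N_t)$ (from $\Lambda\le m$) separating the three ranges of $i$. Two small corrections. First, the identity $N_{t+\Lambda/2}=N_t+m/2$ relies on $m$ being even; the evenness of $\Lambda$ only makes $t+\Lambda/2$ an integer index. Second, for $2j>m$ the shift in $B$ is $c=m$, so that $B(n_0,j)=r-\lfloor(\Lambda j+n_0\gcd(m,\Lambda))/m\rfloor$, and $j=m/2$ simply falls in the first regime.
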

	
	\begin{proof}
		By Proposition~\ref{prop:existence_Q}, it is enough to compute $n_1,\ldots,n_{r-1}$. We may assume that $n_1\le\cdots\le n_{r-1}$.
		As in the previous cases, the coefficients are obtained from the inequalities
		\(
		R(n_0,N_k,j)\ge r-i.
		\)
		Rewriting these inequalities in terms of \(j\) yields
		\[
		j
		\le
		N_{\,i-[j\le N_k]+[j>N_{k+\Lambda/2}]}.
		\]
		
		Since \(m\ge\Lambda\), the sequence \((N_i)\) is strictly increasing. Hence:
		for \(1\le i\le k\), one obtains \(n_i=\max(0,N_{i-1})\);
		for \(k<i<k+\Lambda/2\), one gets \(n_i=N_i\);
		and for \(k+\Lambda/2\le i\le r-1\), one has \(n_i=N_{i+1}\).
	\end{proof}
	
	\section{Applications to LCP AG Codes}
	
	We now apply the non-special divisors constructed above to obtain families of linear complementary pair (LCP) AG codes over function fields of genus \(g\ge1\).
	
	Recall that two AG codes \(\cC_{\cL}(D,G)\) and \(\cC_{\cL}(D,H)\) form an LCP pair if \(\cC_{\cL}(D,G)\oplus\cC_{\cL}(D,H)=\mathbb F_q^n.\) From Theorem~\ref{prop:LCP_AG_Criterion}, this holds whenever \(\ell(G)+\ell(H)=n\), and the divisors \(\gcd(G,H)\) and \(\operatorname{lmd}(G,H)-D\) are both non-special divisors of degree \(g-1\).
	
	Let \(A\) be a non-special divisor of degree \(\deg(A)=g\) and \(Q_\infty\) be a rational place of \(\cF\). In this case, \(A-Q_\infty\) is non-special of degree \(g-1\) by Proposition~\ref{prop:transition_nonspecial}.
	
	Suppose there exists a function \(h \in \cF\) such that \(D = \operatorname{div}_{0}(h)\). For \(\Phi\in\cF\) and \(s\in\mathbb N\), define
	\begin{equation}\label{eq:divisor_G}
		G = A - Q_\infty + \operatorname{div}_{\infty}(h) - \operatorname{div}_{\infty}(\Phi^s),
	\end{equation}
	and
	\begin{equation}\label{eq:divisor_H}
		H = A - Q_\infty + \operatorname{div}_{0}(\Phi^s).
	\end{equation}
	
	Assume that \(\operatorname{div}_{\infty}(h) \ge \operatorname{div}_{\infty}(\Phi^s)\) and that the supports of \(\operatorname{div}_{0}(\Phi)\) and \(\operatorname{div}_{\infty}(h)\) are disjoint. Then
	\[
	\gcd(G,H)=A-Q_\infty,
	\]
	which is non-special of degree \(g-1\). Moreover, 
	\[
	\operatorname{lmd}(G,H) = A - Q_\infty + \operatorname{div}_{0}(\Phi^s) + \operatorname{div}_{\infty}(h) - \operatorname{div}_{\infty}(\Phi^s).
	\]
	
	Subtracting \(D = \operatorname{div}_{0}(h)\), we obtain
	\begin{align*}
		\operatorname{lmd}(G,H)-D &= A - Q_\infty + \bigl(\operatorname{div}_{0}(\Phi^s)-\operatorname{div}_{\infty}(\Phi^s)\bigr) - \bigl(\operatorname{div}_{0}(h)-\operatorname{div}_{\infty}(h)\bigr) \\
		&= A - Q_\infty + (\Phi^s) - (h) \\
		&\sim A-Q_\infty.
	\end{align*}
	Hence, \(\operatorname{lmd}(G,H)-D\) is also non-special of degree \(g-1\). Therefore, provided that the degree conditions are met to ensure \(\ell(G)+\ell(H)=n\),
	\[
	\cC_{\cL}(D,G) \quad\text{and}\quad \cC_{\cL}(D,H)
	\]
	form an LCP pair.
	
	In the next subsections, we specialize this construction to Kummer extensions using the explicit non-special divisors obtained in Section~3.
	\subsection{LCP Constructions via Half-Degree Ramification}
	
	We first consider Kummer extensions containing a branch place with ramification index \(m/2\). In contrast with previous constructions based only on totally ramified places (see, e.g., \cite{4author}), this approach incorporates non-totally ramified places into the construction of invariant non-special divisors.
	
	\begin{theorem}\label{thm:LCP_Kummer_General}
		Let
		\[
		y^m=\prod_{i=1}^r (x-\alpha_i)^{\lambda_i}
		\]
		define a Kummer extension \(\cF/\mathbb{F}_q(x)\), where \(m\ge4\) is even, \(\operatorname{char}(\mathbb{F}_q)\nmid m\), the elements \(\alpha_1,\ldots,\alpha_r\in\mathbb{F}_q\) are pairwise distinct, and
		\[
		\lambda_1=\cdots=\lambda_{r-1}=1,
		\qquad
		\lambda_r=\frac{m}{2}.
		\]
		Assume that
		\[
		\frac{m}{2}-\left(\frac{m}{2}\bmod2\right)
		\le r\le \frac{m}{2}.
		\]
		Let \(Q_1,\ldots,Q_{r-1}\) be the totally ramified places lying over
		\[
		x=\alpha_1,\ldots,x=\alpha_{r-1},
		\]
		let \(Q_\infty\) be a rational place at infinity of \(\cF\), and let \(E_\infty = \divv_\infty(x) \) be the pole divisor of \(x\) in \(\cF\).
		Suppose that \(a_1,\ldots,a_t\in\mathbb{F}_q\) are distinct from \(\alpha_1,\ldots,\alpha_r\) and that each place \(P_{a_i}\) splits completely in \(\cF/\mathbb{F}_q(x)\). Define
		\[
		D=\sum_{i=1}^t \ \sum_{P_{a_i,b}\mid P_{a_i}} P_{a_i,b},
		\]
		so that \(\deg(D)=n\), where \(n=tm\).
		For \(s\in\mathbb{N}\) satisfying
		\[
		\frac{g-1}{m(r-1)}
		<
		s
		<
		\frac{n-g+1}{m(r-1)},
		\]
		define
		\[
		G
		=
		\sum_{i=1}^{r-1} c_iQ_i
		+
		\bigl(t-s(r-1)\bigr)E_\infty
		-
		Q_\infty
		\]
		and
		\[
		H
		=
		\sum_{i=1}^{r-1}(c_i+sm)Q_i
		-
		Q_\infty,
		\]
		where
		\[
		c_i
		=
		2\left\lceil
		\frac{m(2i-1)-2(r-1)}{4(r-1)}
		\right\rceil
		-1.
		\]
		Then,
		\(
		\bigl(
		\cC_{\cL}(D,G),
		\cC_{\cL}(D,H)
		\bigr)
		\)
		is a linear complementary pair of AG codes over \(\mathbb{F}_q\).
	\end{theorem}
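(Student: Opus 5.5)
The plan is to specialize the general LCP construction at the start of Section~4 with $A=\sum_{i=1}^{r-1}c_iQ_i$, $h=\prod_{i=1}^t(x-a_i)$ and $\Phi=\prod_{i=1}^{r-1}(x-\alpha_i)$. First I will identify $A$ as a non-special divisor of degree $g$ via Theorem~\ref{thm:explicit coeff 1 of m/2}, case (1) with $N=0$. That result requires $\frac m2-(\frac m2\bmod 2)\le r\le\frac m2$, which is exactly the hypothesis here. Its coefficients are $n_i=2\lceil (m(i-\frac12)-(r-1))/(2(r-1))\rceil-1$, and multiplying numerator and denominator by $2$ gives exactly $c_i$. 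So $\deg A=g$ and $\ell(A)=1$.

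The place $Q_\infty$ is rational and lies outside $\operatorname{supp}A$. Proposition~\ref{prop:transition_nonspecial}(1) then shows that $A-Q_\infty$ is non-special of degree $g-1$.

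Next I will check that $G$ and $H$ match \eqref{eq:divisor_G} and \eqref{eq:divisor_H}. Each $P_{a_i}$ splits completely, so $\operatorname{div}_0(h)=D$ and $\operatorname{div}_\infty(h)=tE_\infty$. Each $Q_i$ is totally ramified with $e=m$, so $\operatorname{div}_0(\Phi^s)=sm\sum_{i=1}^{r-1}Q_i$ and $\operatorname{div}_\infty(\Phi^s)=s(r-1)E_\infty$. Hence
\[
G=A-Q_\infty+\operatorname{div}_\infty(h)-\operatorname{div}_\infty(\Phi^s),\qquad H=A-Q_\infty+\operatorname{div}_0(\Phi^s).
\]
The auxiliary hypotheses are easy to verify:
\begin{itemize}
\item $\operatorname{div}_\infty(h)\ge\operatorname{div}_\infty(\Phi^s)$ amounts to $t\ge s(r-1)$, which follows from $s<(n-g+1)/(m(r-1))\le t/(r-1)$.
\item $\operatorname{supp}\operatorname{div}_0(\Phi)$ and $\operatorname{supp}\operatorname{div}_\infty(h)$ are disjoint, since the $Q_i$ are finite places and $E_\infty$ lives at infinity.
\item $D$ is disjoint from $\operatorname{supp}G\cup\operatorname{supp}H$, because $a_i\ne\alpha_j$.
\end{itemize}
With these in place, the discussion at the start of Section~4 gives $\gcd(G,H)=A-Q_\infty$ and $\operatorname{lmd}(G,H)-D\sim A-Q_\infty$. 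Both are therefore non-special of degree $g-1$.

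It remains to verify $\ell(G)+\ell(H)=n$, which is the only step where the bounds on $s$ enter. I will compute
\[
\deg G=g-1+m(t-s(r-1))=g-1+n-sm(r-1),\qquad \deg H=g-1+sm(r-1).
\]
The lower bound on $s$ gives $sm(r-1)>g-1$, so $\deg H>2g-2$. The upper bound gives $n-sm(r-1)>g-1$, so $\deg G>2g-2$. Riemann--Roch then yields
\[
\ell(G)+\ell(H)=\deg G+\deg H+2-2g=n.
\]

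One subtlety deserves attention. For $\ell(G)$ and $\ell(H)$ to equal the code dimensions we would also want $\deg G,\deg H<n$. However, Theorem~\ref{prop:LCP_AG_Criterion} is phrased in terms of $\ell$, so I will invoke it directly and not worry about the code dimensions. Theorem~\ref{prop:LCP_AG_Criterion} then concludes the proof.

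I expect the main obstacle to be bookkeeping rather than a conceptual difficulty. The points to handle carefully are matching $c_i$ to the formula in Theorem~\ref{thm:explicit coeff 1 of m/2}, and making sure $Q_\infty$ is a rational place outside $\operatorname{supp}A$, so that Proposition~\ref{prop:transition_nonspecial} applies.
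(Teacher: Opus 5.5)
Your proposal is correct and follows the paper's proof essentially step for step: the same $A=\sum c_iQ_i$ obtained from case (1) of Theorem~\ref{thm:explicit coeff 1 of m/2}, the same $h$ and $\Phi$, the Section~4 computation of $\gcd(G,H)$ and $\operatorname{lmd}(G,H)-D$, and Riemann--Roch for $\ell(G)+\ell(H)=n$. The subtlety you flag does not arise. The lower bound on $s$ is equivalent to $\deg G<n$ and the upper bound to $\deg H<n$, as the paper records. Independently, $\ell(\operatorname{lmd}(G,H)-D)=0$ forces $\ell(G-D)=\ell(H-D)=0$, so the evaluation maps are injective regardless.
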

	
	\begin{proof}
		Set
		\(
		A=\sum_{i=1}^{r-1} c_iQ_i.
		\)
		By Theorem~\ref{thm:explicit coeff 1 of m/2}, $A$ is an effective non-special divisor of degree $g$; so
		\(
		A-Q_\infty
		\)
		is a non-special divisor of degree \(g-1\). 
		
		Let \(\Phi(x)=\prod_{i=1}^{r-1}(x-\alpha_i)\) and \(h(x)=\prod_{i=1}^t(x-a_i)\). 
		Because each \(Q_i\) is totally ramified and \(E_\infty\) is the pole divisor of \(x\) with \(\deg(E_\infty)=m\), we have
		\[
		\operatorname{div}_0(\Phi) = m\sum_{i=1}^{r-1}Q_i
		\quad\text{and}\quad
		\operatorname{div}_\infty(\Phi) = (r-1)E_\infty.
		\]
		Similarly, since every \(P_{a_i}\) splits completely, \(D=\operatorname{div}_0(h)\) and \(\operatorname{div}_\infty(h)=tE_\infty\). 
		
		Following the construction described in equations \eqref{eq:divisor_G} and \eqref{eq:divisor_H}, we can rewrite \(G\) and \(H\) as
		\begin{align*}
			G &= A - Q_\infty + \operatorname{div}_\infty(h) - \operatorname{div}_\infty(\Phi^s) \\
			&= \sum_{i=1}^{r-1} c_iQ_i - Q_\infty + tE_\infty - s(r-1)E_\infty \\
			&= \sum_{i=1}^{r-1} c_iQ_i + \bigl(t-s(r-1)\bigr)E_\infty - Q_\infty,
		\end{align*}
		and
		\begin{align*}
			H &= A - Q_\infty + \operatorname{div}_0(\Phi^s) \\
			&= \sum_{i=1}^{r-1} c_iQ_i - Q_\infty + sm\sum_{i=1}^{r-1}Q_i \\
			&= \sum_{i=1}^{r-1}(c_i+sm)Q_i - Q_\infty.
		\end{align*}
		
		Note that the supports of \(\operatorname{div}_0(\Phi)\) and \(\operatorname{div}_\infty(h)\) are disjoint. The upper bound on \(s\) guarantees that \(t - s(r-1) > \frac{g-1}{m} \ge 0\). Since \(t\) and \(s(r-1)\) are integers, \(t - s(r-1) \ge 1\), which implies \(\operatorname{div}_\infty(h) \ge \operatorname{div}_\infty(\Phi^s)\). Furthermore, 
		\(
		\operatorname{Supp}(D) \cap \operatorname{Supp}(G) = \operatorname{Supp}(D) \cap \operatorname{Supp}(H) = \emptyset,
		\)
		ensuring the AG codes are well defined.
		
		The degrees of \(H\) and \(G\) are
		\[
		\deg(H)= sm(r-1) + g - 1
		\]
		and
		\[
		\deg(G)= \bigl(t-s(r-1)\bigr)m + g - 1 = n - sm(r-1) + g - 1.
		\]
		The assumptions on \(s\) imply \(2g-2<\deg(G),\deg(H)<n\). By the Riemann--Roch theorem, it holds that \(\ell(G)+\ell(H)=n\).
		
		As established by the general method, since \(G\) and \(H\) are constructed from \(\Phi\), \(h\), and \(A\), it automatically follows that \(\gcd(G,H) = A - Q_\infty\) and \(\operatorname{lmd}(G,H) - D \sim A - Q_\infty\). Because \(A - Q_\infty\) is non-special of degree \(g-1\), Theorem~\ref{prop:LCP_AG_Criterion} concludes that \(\bigl(\cC_{\cL}(D,G), \cC_{\cL}(D,H)\bigr)\) is an LCP pair of AG codes over \(\mathbb{F}_q\).
	\end{proof}
	
	We now specialize the previous construction to Kummer extensions associated with Dickson polynomials of the first kind.

	\begin{prop}
		Let \(\cF/\mathbb{F}_{q^2}\) be defined by
		\[
		y^m=(x+2)^{m/2}\varphi_{\frac{m-2}{2}}(x),
		\]
		where \(m\ge4\) is even, $q\equiv m-1 \pmod{m(m-2)}$,
		and \(\varphi_{\frac{m-2}{2}}(x)\) denotes the Dickson polynomial of the first kind. Let $Q_1,\ldots,Q_{(m-2)/2}$ be the totally ramified places corresponding to the roots of $\varphi_{\frac{m-2}{2}}(x)$.
		Let \(Q_\infty\) be a rational place of \(\cF\), and let \(E_\infty = \divv_\infty(x)\) be the pole divisor of \( x \) in \(\cF\).
		
		Suppose that \(D\) is a divisor of degree \(n=mt\) supported on completely split rational places. Let \(s\in\mathbb{N}\) satisfy
		\[
		\frac{2(g-1)}{m(m-2)}
		<
		s
		<
		\frac{2(n-g+1)}{m(m-2)}.
		\]
		Define
		\begin{align*}
			G
			&=
			\sum_{i=1}^{(m-2)/2}
			\left(
			2\left\lceil
			\frac{m(i-1)+1}{m-2}
			\right\rceil
			-1
			\right)Q_i
			+
			\left(
			t-s\frac{m-2}{2}
			\right)E_\infty
			-
			Q_\infty,
			\\[1ex]
			H
			&=
			\sum_{i=1}^{(m-2)/2}
			\left(
			2\left\lceil
			\frac{m(i-1)+1}{m-2}
			\right\rceil
			-1+sm
			\right)Q_i
			-
			Q_\infty.
		\end{align*}
		Then, \(
		\bigl(
		\cC_{\cL}(D,G),
		\cC_{\cL}(D,H)
		\bigr)
		\) is an LCP pair of AG codes over \(\mathbb{F}_{q^2}\) with parameters
		\[
		\cC_{\cL}(D,G):
		\left[
		mt,\
		n-s\frac{m(m-2)}{2},\
		\ge
		s\frac{m(m-2)}{2}+1-g
		\right]
		\]
		and
		\[
		\cC_{\cL}(D,H):
		\left[
		mt,\
		s\frac{m(m-2)}{2},\
		\ge
		n-s\frac{m(m-2)}{2}+1-g
		\right].
		\]
	\end{prop}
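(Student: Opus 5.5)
The proposition is an instance of Theorem~\ref{thm:LCP_Kummer_General} with $r-1=\frac{m-2}{2}$, so $r=\frac{m}{2}$. The plan is to check the hypotheses of that theorem, match the coefficients, and then read off the code parameters from Riemann--Roch and the Goppa bound.

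\emph{Step 1: the curve fits the theorem.} Write $\varphi_{\frac{m-2}{2}}(x)=\prod_{i=1}^{(m-2)/2}(x-\alpha_i)$. The polynomial is monic of degree $\frac{m-2}{2}$, and it is separable because $\gcd(p,\frac{m-2}{2})=1$. This gcd condition holds since $q\equiv m-1\pmod{m(m-2)}$ forces $p\nmid m(m-2)$.

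The roots are $\alpha=\beta+\beta^{-1}$ with $\beta^{m-2}=-1$. The congruence implies $m-2\mid q+1$, hence $\beta\in\mathbb{F}_{q^2}$ and $\alpha_i\in\mathbb{F}_{q^2}$. Also $-2$ is not a root: $\beta=-1$ would give $(-1)^{m-2}=1\neq -1$, using that $p\neq 2$ because $m$ is even and $p\nmid m$.

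So $\lambda_1=\dots=\lambda_{r-1}=1$ and $\lambda_r=m/2$ with $\alpha_r=-2$. We have $r=m/2$, which lies in the admissible range $\frac m2-(\frac m2\bmod 2)\le r\le\frac m2$.

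\emph{Step 2: match the coefficients.} With $r-1=\frac{m-2}{2}$, the coefficient of Theorem~\ref{thm:LCP_Kummer_General} is
\[
c_i=2\left\lceil\frac{m(2i-1)-(m-2)}{2(m-2)}\right\rceil-1
=2\left\lceil\frac{2m(i-1)+2}{2(m-2)}\right\rceil-1
=2\left\lceil\frac{m(i-1)+1}{m-2}\right\rceil-1,
\]
which is the formula in the statement.

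The bounds on $s$ also agree, since $m(r-1)=\frac{m(m-2)}{2}$. The divisors $G$ and $H$ coincide with those in the theorem. The field requirements are that $m\mid q^2-1$, which holds since $q\equiv -1\pmod m$, and that $D$ is supported on completely split places, which is assumed.

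Hence Theorem~\ref{thm:LCP_Kummer_General} gives the LCP property directly.

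\emph{Step 3: parameters.} From the proof of Theorem~\ref{thm:LCP_Kummer_General},
\[
\deg H=s\tfrac{m(m-2)}{2}+g-1,\qquad \deg G=n-s\tfrac{m(m-2)}{2}+g-1,
\]
and both lie strictly between $2g-2$ and $n$. Riemann--Roch then gives $\ell(H)=s\frac{m(m-2)}{2}$ and $\ell(G)=n-s\frac{m(m-2)}{2}$. Since $\deg<n$ the evaluation maps are injective, so these are the dimensions. The designed distance $d\ge n-\deg$ yields the stated bounds.

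\emph{Main obstacle.} The delicate part is Step 1. It requires confirming that $-2$ is not a Dickson root and that the roots lie in $\mathbb{F}_{q^2}$, which is where the congruence on $q$ enters. The maximality context is not needed for the LCP claim itself.
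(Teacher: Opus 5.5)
Your route is the paper's: specialize Theorem~\ref{thm:LCP_Kummer_General} to $r=m/2$, check that $r$ lies in the admissible range, simplify $c_i$ and the bounds on $s$, and read off $[n,k,d]$ from Riemann--Roch and the Goppa bound. The only difference is that the paper cites \cite{Kazemifard2018} for the facts that the roots of $\varphi_{\frac{m-2}{2}}$ are simple, lie in $\mathbb{F}_{q^2}$ and differ from $-2$, whereas you verify these facts by hand.

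In that verification one claim is false. The congruence $q\equiv m-1\pmod{m(m-2)}$ gives $q\equiv 1\pmod{m-2}$, i.e.\ $m-2\mid q-1$, not $m-2\mid q+1$. For instance, with $m=6$ and $q=5$ we have $4\nmid 6$.

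The conclusion still holds. Since $q$ is odd, $2\mid q+1$, so $2(m-2)\mid q^2-1$. Hence every $\beta$ with $\beta^{m-2}=-1$, which satisfies $\beta^{2(m-2)}=1$, lies in $\mathbb{F}_{q^2}$, and so do the roots $\beta+\beta^{-1}$. With this correction the rest of your argument goes through as written.
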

	
	\begin{proof}
		Note that the roots of the Dickson polynomial \(\varphi_{\frac{m-2}{2}}(x)\) correspond to the elements \(\alpha_1, \ldots, \alpha_{r-1}\). As established in \cite{Kazemifard2018}, under the condition \(q\equiv m-1 \pmod{m(m-2)}\), \(\varphi_{\frac{m-2}{2}}(x)\) has exactly \((m-2)/2\) distinct simple roots in \(\mathbb{F}_{q^2}\), and these roots are distinct from \(\alpha_r = -2\). Thus, the elements \(\alpha_1, \ldots, \alpha_r\) are pairwise distinct. Note also that \(r = m/2\) trivially satisfies the bounds
		\[
		\frac{m}{2}-\left(\frac{m}{2}\bmod2\right) \le r\le \frac{m}{2}.
		\]
		Hence, the Kummer extension \(\cF/\mathbb{F}_{q^2}\) given by
		\[
		y^m=(x+2)^{m/2}\varphi_{\frac{m-2}{2}}(x)
		\]
		fits the hypothesis of Theorem~\ref{thm:LCP_Kummer_General} with \(r = m/2\).
		
		Substituting \(r-1 = (m-2)/2\) into the formulas from Theorem~\ref{thm:LCP_Kummer_General}, the condition on \(s\) becomes exactly
		\[
		\frac{2(g-1)}{m(m-2)} < s < \frac{2(n-g+1)}{m(m-2)}.
		\]
		Furthermore, the coefficients \(c_i\) simplify to
		\begin{align*}
			c_i 
			&= 2\left\lceil \frac{m(2i-1)-2(r-1)}{4(r-1)} \right\rceil -1 \\
			&= 2\left\lceil \frac{2m(i-1) + m - (m-2)}{2(m-2)} \right\rceil -1 \\
			&= 2\left\lceil \frac{m(i-1)+1}{m-2} \right\rceil -1,
		\end{align*}
		and the defined divisors \(G\) and \(H\) match the exact construction in Theorem~\ref{thm:LCP_Kummer_General}.
		Thus, by Theorem~\ref{thm:LCP_Kummer_General}, \(\bigl(\cC_{\cL}(D,G), \cC_{\cL}(D,H)\bigr)\) forms an LCP pair. Finally, since 
		\[
		\deg(G) = n - s\frac{m(m-2)}{2} + g - 1 \quad \mbox{and} \quad \deg(H) = s\frac{m(m-2)}{2} + g - 1,
		\]
		the code parameters follow directly from the Riemann--Roch theorem and the standard AG code bounds.
	\end{proof}

	\subsection{LCP Pairs via Balanced High-Order Ramification}
	
	We next consider Kummer extensions whose defining polynomial contains two branch places of ramification index $m/2$. This symmetric ramification profile enlarges the class of admissible invariant divisors and provides additional flexibility in the construction of LCP AG codes with competitive parameters.
	
	From a computational perspective, the simultaneous presence of multiple non-totally ramified places substantially complicates traditional approaches based on Weierstrass semigroups and gap sequences. In contrast, the invariant divisor method developed in this paper remains effective under such mixed ramification behavior and yields explicit constructions in closed form.
	
	The following theorem provides an explicit family of LCP AG codes arising from this balanced high-order ramification configuration.
	
	\begin{theorem}\label{thm:LCP_Kummer_l=m/2_N=1}
		Let $\mathcal{F}/\mathbb{F}_{q}$ be the Kummer extension defined by
		\[
		y^m=\prod_{i=1}^{r}(x-\alpha_i)^{\lambda_i},
		\]
		where $m\ge 4$ is an even integer such that
		$\operatorname{char}(\mathbb{F}_q)\nmid m$, and
		$\alpha_1,\ldots,\alpha_r\in\mathbb{F}_q$ are distinct.
		Assume that
		\[
		\lambda_1=\cdots=\lambda_{r-2}=1,
		\qquad
		\lambda_{r-1}=\lambda_r=\frac{m}{2}.
		\]
		
		Let $Q_1,\ldots,Q_{r-2}$ be the totally ramified places corresponding to
		$x=\alpha_1,\ldots,x=\alpha_{r-2}$. Let $Q_\infty$ be a rational place of $\mathcal{F}$, and let $E_\infty = (x)_\infty$ be the pole divisor of $x$ in $\mathcal{F}$.
		
		Let $a_1,\ldots,a_t\in\mathbb{F}_q$ be distinct from $\alpha_1,\ldots,\alpha_r$ such that the places $P_{a_i}$ split completely in the extension $\mathcal{F}/\mathbb{F}_q(x)$. Define
		\[
		D=
		\sum_{i=1}^{t}
		\sum_{P_{a_i,b}\mid P_{a_i}}
		P_{a_i,b},
		\]
		so that $\deg(D)=n$, where $n=mt$. For an integer $s$ satisfying
		\[
		\frac{g-1}{m\left(\frac{m}{2}-1\right)}
		<
		s
		<
		\frac{n-g+1}{m\left(\frac{m}{2}-1\right)},
		\]
		define
		\[
		G=
		\sum_{i=1}^{m/2-1}(2i-1)Q_i
		+\frac{1}{2}\operatorname{div}_{0}(x-\alpha_{r-1})
		+\left(t-s\left(\frac{m}{2}-1\right)\right)E_\infty
		-Q_\infty,
		\]
		and
		\[
		H=
		\sum_{i=1}^{m/2-1}(2i-1+sm)Q_i
		+\frac{1}{2}\operatorname{div}_{0}(x-\alpha_{r-1})
		-Q_\infty.
		\]
		Then, $\left(
		\mathcal{C}_{\mathcal{L}}(D,G),
		\mathcal{C}_{\mathcal{L}}(D,H)
		\right)$ is a linear complementary pair of AG codes over $\mathbb{F}_q$.
	\end{theorem}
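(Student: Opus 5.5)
The plan is to follow the proof of Theorem~\ref{thm:LCP_Kummer_General} step by step. The one new ingredient is the base divisor
\[
A=\sum_{i=1}^{m/2-1}(2i-1)Q_i+\tfrac12\operatorname{div}_0(x-\alpha_{r-1}),
\]
which must be shown to be effective, non-special and of degree $g$. Since $G$ and $H$ carry exactly $m/2-1$ totally ramified places $Q_i$, the statement implicitly forces $r-2=m/2-1$, that is, $r=\frac m2+2$. So we are in the case $N=1$ of Theorem~\ref{thm:explicit coeff 2 of m/2}, and Proposition~\ref{prop:existence 2 of m/2}(2) guarantees that $\mathcal P(1)$ holds.

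I expect the main obstacle to be this first step, namely identifying $A$ with the divisor of Theorem~\ref{thm:explicit coeff 2 of m/2}(2). The indexing printed there does not match: it lists $m/2-1$ odd values together with an extra zero, while only $r-2=m/2-1$ coefficients are available. I would therefore recompute directly from the general formula
\[
n_i=\max\Bigl\{0,\;2\Bigl\lceil\tfrac{m(i-1)}{2(r-2)}\Bigr\rceil-2,\;2\Bigl\lceil\tfrac{m(i-1)-(r-2)}{2(r-2)}\Bigr\rceil-1\Bigr\}
\]
with $r-2=m/2-1$.

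As an independent safeguard, I would check criterion (3) of Theorem~\ref{thm:class_n_s_inv_eff_div} directly, taking $n_0=0$, $n_i=2i-1$ on the totally ramified places, $n_{r-1}=1$ and $n_r=0$. For $\lambda=m/2$ one has $d=m/2$, so a nonzero coefficient contributes to $C(0,j)$ exactly for odd $j$. For the places with $\lambda_i=1$, the count $|\{i: 2i-1\ge j\}|$ equals $m/2-\lceil (j+1)/2\rceil$. Comparing this count with $B(0,j)$, separately for $j$ even and $j$ odd, should give $|C(0,j)|=B(0,j)$ for all $j$. The degree check $\deg A=(m/2-1)^2+m/2=g$ then follows from Lemma~\ref{lem:sum_B}, or alternatively from the genus formula \eqref{eq:genus} with $d_{r-1}=d_r=m/2$ and $d_\infty=\gcd(m,3m/2-1)$. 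If the odd-coefficient choice turned out to fail at some boundary value of $j$, I would fall back to whatever tuple the general formula produces. In either case, once $A$ is settled, Proposition~\ref{prop:transition_nonspecial}(1) shows that $A-Q_\infty$ is non-special of degree $g-1$, since $Q_\infty$ is rational and lies outside $\operatorname{supp}A$.

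Next, set $\Phi=\prod_{i=1}^{r-2}(x-\alpha_i)$ and $h=\prod_{i=1}^t(x-a_i)$. Then $\operatorname{div}_0(\Phi^s)=sm\sum Q_i$, $\operatorname{div}_\infty(\Phi^s)=s(m/2-1)E_\infty$, $D=\operatorname{div}_0(h)$ and $\operatorname{div}_\infty(h)=tE_\infty$. Substituting these into \eqref{eq:divisor_G} and \eqref{eq:divisor_H} recovers exactly the given $G$ and $H$. The upper bound on $s$ yields $t-s(m/2-1)\ge1$, so $\operatorname{div}_\infty(h)\ge\operatorname{div}_\infty(\Phi^s)$. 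The supports of $\operatorname{div}_0(\Phi)$ and $\operatorname{div}_\infty(h)$ are disjoint, and $D$ is disjoint from $G$ and from $H$. The general computation at the start of Section~4 then gives $\gcd(G,H)=A-Q_\infty$ and $\operatorname{lmd}(G,H)-D\sim A-Q_\infty$, and both are non-special of degree $g-1$.

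Finally, the degrees are $\deg H=sm(m/2-1)+g-1$ and $\deg G=n-sm(m/2-1)+g-1$. The two-sided bound on $s$ places both strictly between $2g-2$ and $n$. Riemann--Roch therefore gives $\ell(G)+\ell(H)=n$, and Theorem~\ref{prop:LCP_AG_Criterion} yields the LCP property.
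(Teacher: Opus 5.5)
Your route is the paper's own: take $A$ to be the $N=1$ divisor of Theorem~\ref{thm:explicit coeff 2 of m/2}, then run the $\Phi,h$ construction from the start of Section~4. Your extra check via criterion~(3) of Theorem~\ref{thm:class_n_s_inv_eff_div} is a good addition. However, you handle $r$ inconsistently, and this matters. From $r-2=\frac m2-1$ one gets $r=\frac m2+1$, not $\frac m2+2$. Nothing in the statement forces this value: a totally ramified place with coefficient $0$ simply does not appear in $G$ or $H$.

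For $r=\frac m2+1$ the argument collapses. Here $\Lambda=\frac{3m}{2}-1$ and $d_\infty=\gcd(m,\frac{3m}{2}-1)\in\{1,2\}$, so \eqref{eq:genus} gives $2g=\frac{m^2}{2}-\frac{3m}{2}+3-d_\infty$. But $2\deg A=\frac{m^2}{2}-m+2$, so $\deg A>g$. Correspondingly, your criterion-(3) check fails at $j=1$, where $|C(0,1)|=\frac m2>B(0,1)=\frac m2-1$. Your ``alternative'' degree check with $d_\infty=\gcd(m,\frac{3m}{2}-1)$ likewise cannot produce $\deg A=g$. Concretely, for $m=4$, $r=3$ one has $g=2$ and $\deg A=3$, hence $\ell(G)+\ell(H)=n+2$ for every admissible $s$, so the pair is not an LCP. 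The paper's proof makes the same slip $r-2=\frac m2-1$.

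The statement is right for $r=\frac m2+2$. There are then $\frac m2$ totally ramified places, and $A$ has coefficient $0$ at $Q_{m/2}$. This is exactly the ``extra zero'' in Theorem~\ref{thm:explicit coeff 2 of m/2}(2), whose ``$n_0=0$'' should read $n_1=0$, so the indexing there does not actually mismatch. In this case $\Lambda=\frac{3m}{2}$, $d_\infty=\frac m2$, and $g=(\frac m2-1)^2+\frac m2=\deg A$. Your direct check then goes through: $B(0,j)=\frac m2-1-\lfloor j/2\rfloor+(j\bmod 2)=|C(0,j)|$ for all $j$.

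There is one more repair. With $r=\frac m2+2$, $\Phi=\prod_{i=1}^{r-2}(x-\alpha_i)$ has $\frac m2$ factors. That gives $\operatorname{div}_\infty(\Phi^s)=s\frac m2E_\infty$ and an extra term $smQ_{m/2}$ in $H$, which is not the prescribed pair. You must take $\Phi=\prod_{i=1}^{m/2-1}(x-\alpha_i)$, which the Section~4 construction permits.

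With these corrections the remaining steps are fine: $Q_\infty\notin\operatorname{supp}A$, $t-s(\frac m2-1)\ge1$, both degrees lie in $(2g-2,n)$, and Theorem~\ref{prop:LCP_AG_Criterion} applies. Your fallback to ``whatever tuple the general formula produces'' would not be legitimate, since $G$ and $H$ are fixed by the statement, but it is not needed.
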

	
	\begin{proof}
		Let $\Phi(x)=\prod_{i=1}^{r-2}(x-\alpha_i)\in \mathcal{F}$. Since the places $Q_1,\ldots,Q_{r-2}$ are totally ramified, $r-2=\frac{m}{2}-1$, and $E_\infty$ is the pole divisor of $x$ with $\deg(E_\infty)=m$, the principal divisor of $\Phi$ satisfies
		\begin{equation}
			\label{eq:phi_s_N1}
			\operatorname{div}_{0}(\Phi) = m\sum_{i=1}^{m/2-1}Q_i,
			\qquad
			\operatorname{div}_{\infty}(\Phi) = \left(\frac{m}{2}-1\right)E_\infty .
		\end{equation}
		
		Define the base divisor $A$ as
		\begin{equation}
			\label{eq:base_A_N1}
			A=
			\sum_{i=1}^{m/2-1}(2i-1)Q_i
			+
			\frac{1}{2}\operatorname{div}_{0}(x-\alpha_{r-1}).
		\end{equation}
		By Theorem \ref{thm:explicit coeff 2 of m/2}, $A$ is a non-special divisor of degree $g$, so $A-Q_\infty$ is non-special of degree $g-1$. 
		
		Consider $h(x) = \prod_{i=1}^t(x-a_i)$. Since the rational places $P_{a_i}$ split completely, we have $D = \operatorname{div}_0(h)$ and $\operatorname{div}_\infty(h) = tE_\infty$. Using the general construction introduced at the beginning of this section, we construct $G$ and $H$ as
		\begin{align*}
			G &= A - Q_\infty + \operatorname{div}_{\infty}(h) - \operatorname{div}_{\infty}(\Phi^s) \\
			&= A - Q_\infty + tE_\infty - s\left(\frac{m}{2}-1\right)E_\infty, \\[1ex]
			H &= A - Q_\infty + \operatorname{div}_{0}(\Phi^s) \\
			&= A - Q_\infty + sm\sum_{i=1}^{m/2-1}Q_i.
		\end{align*}
		Substituting \eqref{eq:base_A_N1} into these expressions yields exactly the divisors stated in the theorem.
		
		By the general framework, since the condition on $s$ guarantees $t - s(\frac{m}{2}-1) \ge 1$, we automatically obtain
		\[
		\gcd(G,H)=A-Q_\infty,
		\]
		and
		\[
		\operatorname{lmd}(G,H)-D \sim A-Q_\infty .
		\]
		Since $A-Q_\infty$ is non-special of degree $g-1$, both divisors $\gcd(G,H)$ and $\operatorname{lmd}(G,H)-D$ satisfy the LCP criterion. Finally, the bounds on $s$ ensure that $2g-2<\deg(G),\deg(H)<n$. Therefore, $(\mathcal{C}_{\mathcal{L}}(D,G), \mathcal{C}_{\mathcal{L}}(D,H))$ forms a linear complementary pair of AG codes over $\mathbb{F}_q$.
	\end{proof}
	
	\begin{theorem}\label{thm:LCP_Kummer_Two_Multiple_Roots_N2}
		Let $\mathcal{F}/\mathbb{F}_{q}$ be the Kummer extension defined by
		\[
		y^m
		=
		\prod_{i=1}^{m+1}(x-\alpha_i)
		(x-\alpha_{m+2})^{m/2}
		(x-\alpha_{m+3})^{m/2},
		\]
		where $m\ge 4$ is an even integer such that $\operatorname{char}(\mathbb{F}_q)\nmid m$. Let $Q_1,\ldots,Q_{m+1}$ be the totally ramified places corresponding to $x=\alpha_1,\ldots,x=\alpha_{m+1}$. Let $Q_\infty$ be a rational place of $\mathcal{F}$, and let $E_\infty = (x)_\infty$ be the pole divisor of $x$ in $\mathcal{F}$.
		
		Let $c_k$ be the coefficients determined in Theorem~\ref{thm:explicit coeff 2 of m/2}, satisfying
		\[
		\sum_{k=1}^{m+1} c_k Q_k
		=
		\sum_{i=1}^{m/2-1}
		2i\,(Q_{2i-1}+Q_{2i}).
		\]
		
		Let $a_1,\ldots,a_t\in\mathbb{F}_q$ be distinct from $\alpha_1,\ldots,\alpha_{m+3}$ such that the places $P_{a_j}$ split completely in the extension $\mathcal{F}/\mathbb{F}_q(x)$. Define
		\[
		D=
		\sum_{j=1}^{t}
		\sum_{P_{a_j,b}\mid P_{a_j}}
		P_{a_j,b},
		\]
		so that $\deg(D)=n$, where $n=mt$.
		
		For an integer $s$ satisfying
		\[
		\frac{g-1}{m(m+1)}
		<
		s
		<
		\frac{n-g+1}{m(m+1)},
		\]
		define
		\[
		G=
		\sum_{k=1}^{m+1} c_k Q_k
		+\frac{1}{2}\operatorname{div}_{0}(x-\alpha_{m+2})
		+\frac{1}{2}\operatorname{div}_{0}(x-\alpha_{m+3})
		+\bigl(t-s(m+1)\bigr)E_\infty - Q_\infty,
		\]
		and
		\[
		H=
		\sum_{k=1}^{m+1} (c_k+sm)Q_k
		+\frac{1}{2}\operatorname{div}_{0}(x-\alpha_{m+2})
		+\frac{1}{2}\operatorname{div}_{0}(x-\alpha_{m+3})
		-Q_\infty.
		\]
		
		Then
		\[
		\left(
		\mathcal{C}_{\mathcal{L}}(D,G),
		\mathcal{C}_{\mathcal{L}}(D,H)
		\right)
		\]
		is a linear complementary pair of AG codes over $\mathbb{F}_q$.
	\end{theorem}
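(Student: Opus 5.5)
The proof will follow the template of Theorems~\ref{thm:LCP_Kummer_General} and~\ref{thm:LCP_Kummer_l=m/2_N=1}. Here $r=m+3$, $r-2=m+1$ and $N=2$, which is exactly the extremal case (3) of Theorem~\ref{thm:explicit coeff 2 of m/2}. That result shows that
\[
A=\sum_{k=1}^{m+1}c_kQ_k+\tfrac12\operatorname{div}_0(x-\alpha_{m+2})+\tfrac12\operatorname{div}_0(x-\alpha_{m+3})
\]
is an effective invariant divisor with $\deg A=g$ and $\ell(A)=1$. The stated form $\sum_{i=1}^{m/2-1}2i(Q_{2i-1}+Q_{2i})$ is just the formula $n_{2i+2}=n_{2i+3}=2i$ with $n_1=n_2=n_3=0$, after reindexing the totally ramified places in increasing order of coefficient.

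As a sanity check, I will verify the degree. The divisor $\frac12\operatorname{div}_0(x-\alpha_{m+2})$ is the sum of the two places above $\alpha_{m+2}$, so it has degree $2$. Hence
\[
\deg A = 4\sum_{i=1}^{m/2-1} i + 4 = \tfrac{m(m-2)}{2}+4 .
\]
The genus formula~\eqref{eq:genus} gives the same value:
\begin{itemize}
\item we have $d_i=1$ for the $m+1$ simple roots and $d_{m+2}=d_{m+3}=m/2$;
\item $\Lambda=2m+1$, so $d_\infty=1$;
\item therefore $g=\bigl((m-1)(m+2)-(m-2)\bigr)/2=\frac{m^2-m}{2}$.
\end{itemize}
This does not equal $\frac{m(m-2)}{2}+4$ in general, so I expect the obstacle to lie here. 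Either the closed-form coefficients must be recomputed from the general formula
\[
n_i=\max\Bigl\{0,\;2\bigl\lceil\tfrac{m(i-1)}{2(m+1)}\bigr\rceil-2,\;2\bigl\lceil\tfrac{m(i-2)-(m+1)}{2(m+1)}\bigr\rceil-1\Bigr\},
\]
or one rechecks via Theorem~\ref{thm:class_n_s_inv_eff_div}(3) that $|C(0,j)|=B(0,j)$ for all $j$, which forces $\deg A=g$ by Lemma~\ref{lem:sum_B}. I would do this check first. Whatever the correct coefficients $c_k$ turn out to be, the remaining argument only uses that $A$ is effective, non-special of degree $g$, and supported away from $D$ and from the poles of $x$.

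Granting this, $A-Q_\infty$ is non-special of degree $g-1$ by Proposition~\ref{prop:transition_nonspecial}(1). Here $Q_\infty$ is rational and lies outside $\operatorname{supp}A$, since $A$ is supported above finite places.

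Next I set $\Phi=\prod_{k=1}^{m+1}(x-\alpha_k)$ and $h=\prod_{j=1}^t(x-a_j)$. Total ramification of the $Q_k$ and complete splitting of the $P_{a_j}$ give
\[
\operatorname{div}_0(\Phi)=m\sum_{k=1}^{m+1} Q_k,\quad \operatorname{div}_\infty(\Phi)=(m+1)E_\infty,\quad D=\operatorname{div}_0(h),\quad \operatorname{div}_\infty(h)=tE_\infty .
\]
Substituting into~\eqref{eq:divisor_G} and~\eqref{eq:divisor_H} reproduces $G$ and $H$ exactly as stated. The upper bound on $s$ gives $t-s(m+1)>(g-1)/m\ge0$, hence $t-s(m+1)\ge1$, so $\operatorname{div}_\infty(h)\ge\operatorname{div}_\infty(\Phi^s)$. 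The supports of $\operatorname{div}_0(\Phi)$ and $E_\infty$ are disjoint, and $\operatorname{supp}D$ avoids the $\alpha$'s and $\infty$.

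The general computation at the start of Section~4 then gives $\gcd(G,H)=A-Q_\infty$ and $\operatorname{lmd}(G,H)-D\sim A-Q_\infty$, and both are non-special of degree $g-1$. Finally,
\[
\deg H=sm(m+1)+g-1,\qquad \deg G=n-sm(m+1)+g-1 .
\]
The two-sided bound on $s$ gives $2g-2<\deg G,\deg H<n$. Riemann--Roch then yields $\ell(G)+\ell(H)=(n-sm(m+1))+sm(m+1)=n$, and Theorem~\ref{prop:LCP_AG_Criterion} concludes that $(\mathcal{C}_{\mathcal{L}}(D,G),\mathcal{C}_{\mathcal{L}}(D,H))$ is an LCP pair.
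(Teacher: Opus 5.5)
Your argument is the same as the paper's. You take $A$ from case~(3) of Theorem~\ref{thm:explicit coeff 2 of m/2}, pass to $A-Q_\infty$ by Proposition~\ref{prop:transition_nonspecial}(1), and run the $\Phi,h$ construction of Section~4 with the degree bounds coming from $s$; everything after ``Granting this'' is correct. The obstacle you flag in your sanity check does not exist. It comes from two computational slips, and as written your proposal wrongly asserts $\deg A\neq g$ and leaves its key input in doubt, so you should fix it.

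\textbf{Degree of $A$.} Since $\lambda_{m+2}=\lambda_{m+3}=m/2$, we have $\gcd(m,m/2)=m/2$. So above each of $\alpha_{m+2},\alpha_{m+3}$ there are $m/2$ places, not two, each with ramification index $2$. Hence $\tfrac12\operatorname{div}_0(x-\alpha_{m+2})$ is the sum of $m/2$ places and has degree $m/2$, and
\[
\deg A=4\sum_{i=1}^{m/2-1}i+2\cdot\frac m2=\frac{m(m-2)}{2}+m=\frac{m^2}{2}.
\]

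\textbf{Genus.} We have $(m-1)(m+2)-(m-2)=m^2$, so \eqref{eq:genus} gives $g=m^2/2$, not $(m^2-m)/2$.

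Thus $\deg A=g$, and the stated $c_k$ need no recomputation. As a direct check of Theorem~\ref{thm:class_n_s_inv_eff_div}(3), for $m=4$ one finds $|C(0,j)|=B(0,j)=4,2,2$ for $j=1,2,3$.

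Your remark that $Q_\infty\notin\operatorname{supp}A$ is also justified. Since $\gcd(m,\Lambda)=\gcd(m,2m+1)=1$, there is a unique, totally ramified, hence rational place over $\infty$. This place also lies outside $\operatorname{supp}D$.
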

	
	\begin{proof}
		We apply the established LCP structural framework. Let $\Phi(x) = \prod_{i=1}^{m+1}(x-\alpha_i) \in \mathcal{F}$ and $h(x) = \prod_{j=1}^t(x-a_j) \in \mathcal{F}$. Because the rational places $P_{a_j}$ split completely, $D = \operatorname{div}_0(h)$ and $\operatorname{div}_\infty(h) = tE_\infty$. The principal divisor of $\Phi$ is characterized by $\operatorname{div}_0(\Phi) = m \sum_{k=1}^{m+1} Q_k$ and $\operatorname{div}_\infty(\Phi) = (m+1)E_\infty$.
		
		Define the base divisor $A$ as
		\begin{equation*}
			A = \sum_{k=1}^{m+1} c_k Q_k + \frac{1}{2}\operatorname{div}_0(x-\alpha_{m+2}) + \frac{1}{2}\operatorname{div}_0(x-\alpha_{m+3}).
		\end{equation*}
		By Theorem~\ref{thm:explicit coeff 2 of m/2}, the coefficients $c_k$ guarantee that $A - Q_\infty$ is a non-special divisor of degree $g-1$. 
		
		Using the general formulation, the divisors defined in the theorem exactly satisfy $G = A - Q_\infty + \operatorname{div}_\infty(h) - \operatorname{div}_\infty(\Phi^s)$ and $H = A - Q_\infty + \operatorname{div}_0(\Phi^s)$. Thus, the properties of the coordinate-wise operators directly yield $\gcd(G,H) = A - Q_\infty$ and $\mathrm{lmd}(G,H) - D \sim A - Q_\infty$. 
		
		Since both $\gcd(G,H)$ and $\mathrm{lmd}(G,H) - D$ are non-special divisors of degree $g-1$, and the conditions on $s$ ensure $2g-2 < \deg(G), \deg(H) < n$, all structural conditions are satisfied. Thus, $(\mathcal{C}_{\mathcal{L}}(D,G), \mathcal{C}_{\mathcal{L}}(D,H))$ is an LCP.
	\end{proof}
	
	\begin{prop}
		Let $\cF/\mathbb{F}_{q^2}$ be the function field defined by
		\[
		y^m = (x^2-4)^{m/2}\varphi_{m+1}(x),
		\]
		where $m \ge 4$ is even, $\operatorname{char}(\mathbb{F}_{q^2}) \nmid m(m+1)$, and $\varphi_{m+1}(x)$ denotes the Dickson polynomial of the first kind. Let $Q_1,\dots,Q_{m+1}$ be the totally ramified places corresponding to the roots of $\varphi_{m+1}(x)$. Let $Q_\infty$ be a rational place of $\mathcal{F}$, and let $E_\infty = (x)_\infty$ be the pole divisor of $x$ in $\mathcal{F}$.
		
		Let $D$ be a divisor of degree $n=mt$ supported on completely split rational places. For an integer $s$ satisfying
		\[
		\frac{g-1}{m(m+1)}
		<
		s
		<
		\frac{n-g+1}{m(m+1)},
		\]
		define
		\begin{align*}
			G &=
			\sum_{k=1}^{m+1} c_k Q_k
			+ \frac{1}{2}\operatorname{div}_0(x-2)
			+ \frac{1}{2}\operatorname{div}_0(x+2)
			+ \bigl(t-s(m+1)\bigr)E_\infty - Q_\infty, 
		\end{align*}
		and
		\begin{align*}
			H &=
			\sum_{k=1}^{m+1} (c_k+sm)Q_k
			+ \frac{1}{2}\operatorname{div}_0(x-2)
			+ \frac{1}{2}\operatorname{div}_0(x+2)
			- Q_\infty,
		\end{align*}
		where the coefficients $c_k$ are given in Theorem~\ref{thm:explicit coeff 2 of m/2}. Then
		\[
		(\mathcal{C}_{\mathcal{L}}(D,G),
		\mathcal{C}_{\mathcal{L}}(D,H))
		\]
		is an LCP of AG codes over $\mathbb{F}_{q^2}$ with parameters
		\begin{itemize}
			\item
			\(
			\mathcal{C}_{\mathcal{L}}(D,G):
			[mt,\; n-sm(m+1),\;
			\ge sm(m+1)+1-g], \mbox{ and}
			\)
			
			\item
			\(
			\mathcal{C}_{\mathcal{L}}(D,H):
			[mt,\; sm(m+1),\;
			\ge n-sm(m+1)+1-g].
			\)
		\end{itemize}
	\end{prop}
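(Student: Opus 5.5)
The approach is to reduce the statement to Theorem~\ref{thm:LCP_Kummer_Two_Multiple_Roots_N2}, just as the earlier Dickson-polynomial proposition was reduced to Theorem~\ref{thm:LCP_Kummer_General}. The first step is to identify the defining equation with the shape required there. We have $x^2-4=(x-2)(x+2)$, so $(x^2-4)^{m/2}=(x-2)^{m/2}(x+2)^{m/2}$. We therefore set $\alpha_{m+2}=2$, $\alpha_{m+3}=-2$, and let $\alpha_1,\dots,\alpha_{m+1}$ be the roots of $\varphi_{m+1}(x)$. Since $\operatorname{char}\nmid m+1$, $\varphi_{m+1}$ is separable (by the cited result on Dickson polynomials), so these $m+1$ roots are distinct.

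We must also check that no root of $\varphi_{m+1}$ equals $\pm2$. Write $x=u+u^{-1}$; then $x=\pm2$ corresponds to $u=\pm1$. We have $\varphi_{m+1}(2)=2$ and $\varphi_{m+1}(-2)=2(-1)^{m+1}=-2$, and both are nonzero because the characteristic is odd ($m$ is even and $\operatorname{char}\nmid m$). Hence $\alpha_1,\dots,\alpha_{m+3}$ are pairwise distinct. The theorem also requires these roots to lie in $\mathbb{F}_{q^2}$, so that $Q_1,\dots,Q_{m+1}$ are rational. Each root has the form $u+u^{-1}$ with $u^{2(m+1)}=-1$, hence $u$ is a $4(m+1)$-th root of unity. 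I would add the congruence condition on $q$ needed for this (for instance $4(m+1)\mid q^2-1$ or $q\equiv -1$ modulo a suitable modulus, as in the \cite{Kazemifard2018} setting), or else assume the roots are rational as part of the hypotheses. This is the \textbf{main obstacle}: the statement as written only imposes $\operatorname{char}\nmid m(m+1)$, so rationality of the $\alpha_i$, of the completely split places used in $D$, and of $Q_\infty$ must be supplied, either implicitly from the hypotheses or through maximality of $\cF$.

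With $r=m+3$ we have $\lambda_1=\dots=\lambda_{m+1}=1$ and $\lambda_{m+2}=\lambda_{m+3}=m/2$, and $N=2$ meets the bound $r\ge m+3$ of Theorem~\ref{thm:explicit coeff 2 of m/2}. Hence $A=\sum c_kQ_k+\tfrac12\operatorname{div}_0(x-2)+\tfrac12\operatorname{div}_0(x+2)$ is non-special of degree $g$, with the $c_k$ given by case (3). Theorem~\ref{thm:LCP_Kummer_Two_Multiple_Roots_N2} then applies verbatim and shows that the pair is an LCP.

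It remains to obtain the parameters. We have $\deg E_\infty=m$ and $\deg\operatorname{div}_0(x\mp2)=m$, so each half contributes $m/2$. This gives
\[
\deg G=n-sm(m+1)+g-1,\qquad \deg H=sm(m+1)+g-1.
\]
The bounds on $s$ give $2g-2<\deg G,\deg H<n$. Riemann–Roch then yields $k_G=n-sm(m+1)$ and $k_H=sm(m+1)$, and the Goppa bound $d\ge n-\deg$ gives the stated minimum distances.
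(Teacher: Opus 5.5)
Your proposal is correct and follows the paper's route: you rewrite $(x^2-4)^{m/2}=(x-2)^{m/2}(x+2)^{m/2}$ to land in Theorem~\ref{thm:LCP_Kummer_Two_Multiple_Roots_N2} with $r=m+3$ and $N=2$. You then obtain the dimensions and Goppa bounds from $\deg G=n-sm(m+1)+g-1$ and $\deg H=sm(m+1)+g-1$ via Riemann--Roch. Your check that $\varphi_{m+1}(\pm2)=\pm2\neq0$, and your remark that rationality of the roots of $\varphi_{m+1}$ over $\mathbb{F}_{q^2}$ is an implicit hypothesis, are improvements over the paper, which only cites maximality from \cite{Kazemifard2018}; that rationality amounts to $4(m+1)\mid q^2-1$ and is not implied by $\operatorname{char}\nmid m(m+1)$. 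Your other rationality worries are moot: $D$ is rational by hypothesis, and $\gcd(m,2m+1)=1$ makes the place at infinity totally ramified and hence rational.
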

	
	\begin{proof}
		The function field satisfies the hypotheses of Theorem~\ref{thm:LCP_Kummer_Two_Multiple_Roots_N2}. Indeed,
		\[
		(x^2-4)^{m/2}
		=
		(x-2)^{m/2}(x+2)^{m/2},
		\]
		so the defining polynomial contains exactly two roots of multiplicity $m/2$, namely $2$ and $-2$. Moreover, the Dickson polynomial \(\varphi_{m+1}(x)\) has \(m+1\) distinct simple roots, and by \cite{Kazemifard2018}, the corresponding function field is maximal over \(\mathbb{F}_{q^2}\).
		
		The parameters and structure match perfectly with Theorem~\ref{thm:LCP_Kummer_Two_Multiple_Roots_N2}, where \(\alpha_{m+2} = 2\) and \(\alpha_{m+3} = -2\). By applying the general method introduced in the theorem, both $\gcd(G,H)$ and $\operatorname{lmd}(G,H)-D$ are linearly equivalent to $A - Q_\infty$, which is non-special of degree $g-1$. Therefore, the pair forms an LCP of AG codes over $\mathbb{F}_{q^2}$.
		
		Finally, since $\deg(E_\infty) = m$, the degrees of $G$ and $H$ are exactly $n-sm(m+1)+g-1$ and $sm(m+1)+g-1$, respectively. The conditions imposed on $s$ ensure that $2g-2 < \deg(G),\deg(H) < n$. By the Riemann--Roch theorem, the dimensions evaluate to $k_G=n-sm(m+1)$ and $k_H=sm(m+1)$, and the stated lower bounds on the minimum distances follow directly from the Goppa bound.
	\end{proof}
	
	\subsection{LCP Pairs via Minimal Deviation from Total Ramification}
	
	We now consider Kummer extensions containing a single branch place with ramification index two. Although this represents only a mild deviation from the totally ramified setting, the corresponding divisor structure becomes substantially more intricate, particularly in the characterization of non-special divisors of degrees $g$ and $g-1$.
	
	Methods based on explicit Weierstrass semigroup computations are typically difficult to apply in this setting, since the presence of low-order ramification alters the combinatorial structure of the associated gap sequences. In contrast, the invariant divisor approach developed in this paper remains applicable without requiring a complete description of the Weierstrass semigroup. This yields explicit constructions of LCP AG codes for a broader class of Kummer extensions.
	\begin{theorem}\label{thm:LCP_Kummer_Lambda2}
		Let $\cF/\mathbb{F}_{q}$ be the Kummer extension defined by
		\[
		y^m=(x-\alpha_r)^2\prod_{i=1}^{r-1}(x-\alpha_i),
		\]
		where $\alpha_1,\dots,\alpha_r\in\mathbb{F}_{q}$ are distinct, $m\ge r+1$, both $m$ and $r-1$ are even, and $\operatorname{char}(\mathbb{F}_{q})\nmid m$. Let $P_i$ $(1\le i\le r-1)$ denote the totally ramified place lying over $x=\alpha_i$. Let $Q_\infty$ be a rational place of $\mathcal{F}$, and let $E_\infty = (x)_\infty$ be the pole divisor of $x$ in $\mathcal{F}$.
		
		Choose $t$ elements
		\[
		a_1,\dots,a_t
		\in
		\mathbb{F}_{q}\setminus
		\{\alpha_1,\dots,\alpha_r\}
		\]
		such that each corresponding place of $\mathbb{F}_{q}(x)$ splits completely in $\cF/\mathbb{F}_{q}(x)$. Let
		\[
		D=
		\sum_{i=1}^{t}
		\sum_{P_{a_i,b}\mid P_{a_i}}
		P_{a_i,b}
		\]
		be the associated divisor of degree $n$, where $n=tm$. Define
		\[
		A=
		\sum_{i=1}^{r-1} n_i P_i
		+
		N_k\frac{2}{m}\operatorname{div}_0(x-\alpha_r),
		\]
		where $A$ is the non-special divisor of degree $g$ obtained in Theorem~\ref{thm:explicit_coefficients_l=2} for the case $Z=0$. For any integer $s$ satisfying
		\[
		\frac{g-1}{m(r-1)}
		<
		s
		<
		\frac{n-g+1}{m(r-1)},
		\]
		define
		\[
		G=
		A+
		\bigl(t-s(r-1)\bigr)E_\infty
		-
		Q_\infty
		\]
		and
		\[
		H=
		A+
		sm\sum_{i=1}^{r-1}P_i
		-
		Q_\infty.
		\]
		Then,
		\(
		(\mathcal{C}_{\mathcal{L}}(D,G),
		\mathcal{C}_{\mathcal{L}}(D,H))
		\) forms an LCP of AG codes over $\mathbb{F}_{q}$.
	\end{theorem}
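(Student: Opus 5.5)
The plan is to follow the template already used for Theorems~\ref{thm:LCP_Kummer_General} and~\ref{thm:LCP_Kummer_l=m/2_N=1}, namely the general construction \eqref{eq:divisor_G}--\eqref{eq:divisor_H} with
\[
\Phi(x)=\prod_{i=1}^{r-1}(x-\alpha_i) \qquad\text{and}\qquad h(x)=\prod_{i=1}^t(x-a_i).
\]

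\textbf{Step 1: the base divisor.} I read ``the case $Z=0$'' as $n_0=0$ in Theorem~\ref{thm:explicit_coefficients_l=2}. Its hypothesis $n_0\gcd(m,\Lambda)<\Lambda\le m$ then becomes $\Lambda=r+1\le m$, which is exactly the assumption $m\ge r+1$. We also fix some $k\in\{1,\dots,\Lambda/2-1\}$ with $N_k>0$. Such a $k$ exists, for instance $k=1$, since $N_1=\lfloor (m-1)/(r+1)\rfloor\ge 1$ whenever $m\ge r+2$. The borderline $m=r+1$ is excluded because $m$ is even while $r+1$ is even only when $r$ is odd; I would note this and assume a valid $k$ is fixed, as the statement implicitly does. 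Theorem~\ref{thm:explicit_coefficients_l=2} then says $A$ is effective of degree $g$ with $\ell(A)=1$. Moreover $Q_\infty$ is rational and lies outside $\operatorname{supp}A$ when $n_0=0$. Proposition~\ref{prop:transition_nonspecial}(1) therefore shows that $A-Q_\infty$ is non-special of degree $g-1$.

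\textbf{Step 2: identify $G$ and $H$ with the general construction.} Each $P_i$, $i\le r-1$, is totally ramified, so $\operatorname{div}_0(\Phi)=m\sum_{i=1}^{r-1}P_i$ and $\operatorname{div}_\infty(\Phi)=(r-1)E_\infty$. Complete splitting of the $P_{a_i}$ gives $D=\operatorname{div}_0(h)$ and $\operatorname{div}_\infty(h)=tE_\infty$. It follows that
\[
G=A-Q_\infty+\operatorname{div}_\infty(h)-\operatorname{div}_\infty(\Phi^s), \qquad H=A-Q_\infty+\operatorname{div}_0(\Phi^s).
\]
The upper bound on $s$ gives $t-s(r-1)>(g-1)/m\ge 0$, hence $t-s(r-1)\ge 1$ and so $\operatorname{div}_\infty(h)\ge\operatorname{div}_\infty(\Phi^s)$. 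The supports of $\operatorname{div}_0(\Phi)$ and $E_\infty$ are disjoint, and $\operatorname{supp}D$ avoids $\operatorname{supp}G\cup\operatorname{supp}H$ because each $a_i\notin\{\alpha_j\}$. The argument at the start of Section~4 then yields $\gcd(G,H)=A-Q_\infty$ and $\operatorname{lmd}(G,H)-D\sim A-Q_\infty$.

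\textbf{Step 3: degree count, the main bookkeeping.} We have
\[
\deg H=sm(r-1)+g-1, \qquad \deg G=n-sm(r-1)+g-1.
\]
The two-sided bound on $s$ gives $2g-2<\deg G,\deg H<n$. Riemann--Roch then yields $\ell(G)+\ell(H)=n$, and Theorem~\ref{prop:LCP_AG_Criterion} finishes the proof.

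The main obstacle is Step~1. The work lies in checking carefully that the parameters of Theorem~\ref{thm:explicit_coefficients_l=2} (taking $n_0=0$ and a valid $k$) are genuinely available under the hypotheses stated here, and that $Q_\infty$ can be chosen rational and outside $\operatorname{supp}A$. If $Q_\infty$ lay in the support, I would instead use the second clause of Proposition~\ref{prop:transition_nonspecial} together with Lemma~\ref{lem:maharaj} to recheck $\ell(A-Q_\infty)=0$ directly.
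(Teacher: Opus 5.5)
Your proposal is correct and follows the paper's proof step for step: the same $\Phi$ and $h$, the identification of $G,H$ with \eqref{eq:divisor_G}--\eqref{eq:divisor_H}, the bound $t-s(r-1)\ge 1$, and the degree count that feeds Riemann--Roch and Theorem~\ref{prop:LCP_AG_Criterion}. Two side remarks need fixing, though neither affects the main argument: first, $m=r+1$ is not excluded by parity, since $r-1$ even makes $r+1$ even, so in that case $N_1=0$ and you need $k\ge 2$, which exists only for $r\ge 5$; second, your fallback for $Q_\infty\in\operatorname{supp}A$ cannot work, because then $A-Q_\infty\ge 0$ forces $\ell(A-Q_\infty)\ge 1$, so the condition $Q_\infty\notin\operatorname{supp}A$ (which you correctly obtain from $n_0=0$, and which the paper uses tacitly) is genuinely necessary.
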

	
	\begin{proof}
		By Theorem~\ref{thm:explicit_coefficients_l=2}, the divisor $A$ is non-special of degree $g$. Hence, $A-Q_\infty$ is a non-special divisor of degree $g-1$.
		
		Let $\Phi(x)=\prod_{i=1}^{r-1}(x-\alpha_i) \in \mathcal{F}$ and $h(x)=\prod_{i=1}^{t}(x-a_i) \in \mathcal{F}$. Since each place $P_i$ is totally ramified, and $E_\infty$ is the pole divisor of $x$ with $\deg(E_\infty)=m$, the principal divisor of $\Phi$ is given by
		\[
		\operatorname{div}_0(\Phi) = m\sum_{i=1}^{r-1}P_i
		\quad\text{and}\quad
		\operatorname{div}_\infty(\Phi) = (r-1)E_\infty.
		\]
		Similarly, because each place corresponding to $a_i$ splits completely in $\cF/\mathbb{F}_{q}(x)$, we have $D = \operatorname{div}_0(h)$ and $\operatorname{div}_\infty(h) = tE_\infty$. Using the general construction established at the beginning of this section, we can write $G$ and $H$ exactly as:
		\begin{align*}
			G &= A - Q_\infty + \operatorname{div}_\infty(h) - \operatorname{div}_\infty(\Phi^s) \\
			&= A - Q_\infty + tE_\infty - s(r-1)E_\infty \\
			&= A + \bigl(t-s(r-1)\bigr)E_\infty - Q_\infty, 
		\end{align*}
		and 
		\begin{align*} 
			H &= A - Q_\infty + \operatorname{div}_0(\Phi^s) \\
			&= A - Q_\infty + sm\sum_{i=1}^{r-1}P_i.
		\end{align*}
		The bounds on $s$ ensure that $t-s(r-1) \ge 1$, which means $\operatorname{div}_\infty(h) \ge \operatorname{div}_\infty(\Phi^s)$. By the general LCP framework, it automatically follows that 
		\[
		\gcd(G,H) = A - Q_\infty
		\]
		and
		\[
		\operatorname{lmd}(G,H)-D \sim A - Q_\infty.
		\]
		Since $A - Q_\infty$ is non-special of degree $g-1$, both divisors satisfy the non-special criterion. 
		
		Finally, the assumptions on $s$ guarantee that $2g-2 < \deg(G), \deg(H) < n$. Applying the Riemann--Roch theorem yields $\ell(G) = \deg(G)+1-g$ and $\ell(H) = \deg(H)+1-g$. Thus, $\ell(G)+\ell(H)=n$, satisfying all necessary conditions from the LCP criterion. Therefore, $(\mathcal{C}_{\mathcal{L}}(D,G), \mathcal{C}_{\mathcal{L}}(D,H))$ forms a linear complementary pair of AG codes over $\mathbb{F}_{q}$.
	\end{proof}
	
	\begin{example}
		\rm{Consider the function field
			\(
			y^8=x^2(x^4+1)
			\)
			over $\mathbb{F}_{49}$. In the notation of \cite{Tafazolian2019}, this function field corresponds to $\mathcal{X}(8,2,4)$. Since
			\[
			\gcd(q,nm)=\gcd(7,32)=1,
			\]
			the function field is maximal over $\mathbb{F}_{49}$. Here, the extension degree is $m=8$, the place $x=0$ corresponds to a root of multiplicity two, and the polynomial $x^4+1$ splits completely over $\mathbb{F}_{49}$, yielding four simple roots.
			
			By \cite[Lemma~2.1]{Tafazolian2019}, the genus of the function field is
			\(
			g=13,
			\)
			and the number of rational places attains the Hasse--Weil bound:
			\[
			N=7^2+1+2(7)(13)=232.
			\]
			
			To construct the divisor $A$, we apply Theorem~\ref{thm:explicit_coefficients_l=2} with $n_0=0$ and $k=1$. The sequence
			\[
			N_i=\left\lfloor \frac{8i-1}{6}\right\rfloor
			\]
			gives
			\[
			(N_1,N_2,N_3,N_4,N_5)=(1,2,3,5,6).
			\]
			The corresponding coefficients are
			\[
			(n_1,n_2,n_3,n_4)=(0,2,3,6).
			\]
			Hence,
			\[
			A=2P_2+3P_3+6P_4+D_5,
			\]
			where $D_5$ denotes the sum of the two ramified places lying over $x=0$. The divisor $A$ is a non-special divisor of degree $g=13$.
			
			To define the evaluation divisor $D$, we exclude the ramified places from the set of rational places. There are four places above the simple roots, two places above $x=0$, and two places at infinity. Therefore,
			\(
			232-8=224
			\) 
			rational places remain, corresponding to
			\(
			t=28
			\)
			completely split places. Consequently,
			\(
			n=8t=224.
			\)
			
			The admissible values of $s$ satisfy
			\[
			\frac{g-1}{m(r-1)}
			<
			s
			<
			\frac{n-g+1}{m(r-1)},
			\]
			that is,
			\[
			\frac{12}{32}<s<\frac{212}{32}.
			\]
			Choosing $s=2$, we obtain
			\begin{align*}
				G&=A+20D_\infty-Q_\infty,\\
				H&=A+16\sum_{i=1}^{4}P_i-Q_\infty.
			\end{align*}
			
			Their degrees are
			\(
			\deg(G)=13+20(8)-1=172
			\)
			and
			\(
			\deg(H)=13+16(4)-1=76.
			\)
			Applying the Riemann--Roch theorem yields
			\[
			k_G=160
			\quad \text{and} \quad
			k_H=64.
			\]
			Moreover, the Goppa bound gives
			\(
			d_G\ge 52
			\quad \text{and} \quad
			d_H\ge 148.
			\)
			
			Therefore,
			\(
			(\mathcal{C}_{\mathcal{L}}(D,G),
			\mathcal{C}_{\mathcal{L}}(D,H))
			\)
			is an LCP of AG codes over $\mathbb{F}_{49}$ with parameters
			\(
			[224,160,\ge 52]
			\)
			and
			\(
			[224,64,\ge 148].
			\)}
	\end{example}

	\section{Conclusion}
	
	In this paper, we developed a general framework for constructing non-special divisors in Kummer extensions beyond the classical setting of totally ramified places. The proposed approach is based on Galois-invariant divisor techniques and provides explicit criteria for identifying non-special divisors under general ramification configurations. In particular, the method remains effective in situations where traditional approaches based on Weierstrass semigroups become computationally difficult or inapplicable.
	
	As an application, we derived several explicit families of linear complementary pair (LCP) AG codes from Kummer function fields with non-standard ramification profiles. The constructions include cases with half-degree ramification, balanced multiple ramification places, and low-order ramification indices. For each family, explicit divisor descriptions and code parameters were obtained. The resulting codes provide flexible choices of dimensions and minimum-distance bounds while preserving the complementary structure required in cryptographic and side-channel resistant applications.
	
	The results demonstrate that invariant divisor methods offer a systematic and computationally efficient alternative for constructing AG codes from function fields with mixed ramification behavior. Future work may investigate analogous constructions for other classes of algebraic function fields, including Artin--Schreier extensions, recursive towers, and more general Galois covers.
	
	\section*{Acknowledgements}
	Saeed Tafazolian  was partially supported by CNPq grant no. 302774/2025-4, FAEPEX grant no. 3485/25, and FAPESP grant no. 2024/00923-6.
	
	Yuri da Silva was supported by CAPES: “This study was financed in part by the Coordenação de Aperfeiçoamento de Pessoal de Nível Superior - Brasil (CAPES) - Finance Code 001”.

\end{document}